\newtheorem{thm}{Theorem}[section]
\newtheorem{lemma}[thm]{Lemma}
\theoremstyle{definition}
\newtheorem{defin}[thm]{Definition}
\theoremstyle{remark}
\newcommand{\qedwhite}{\hfill \ensuremath{\Box}}
\renewenvironment{proof}{{\raggedright \bfseries Proof.}}{\qedwhite}
\newcommand{\floor}[1]{\left\lfloor {#1} \right\rfloor}
\newcommand{\ceil}[1]{\left\lceil {#1} \right\rceil}
\newcommand\numberthis{\addtocounter{equation}{1}\tag{\theequation}}
\numberwithin{equation}{section}
\begin{document}

\leftline{ \scriptsize \it  }
\title[]{Weighted approximation By Max-product Kantrovich type Exponential Sampling Series}

%***************************************************************************************************************************************************************************************************

\maketitle

\begin{center}
    \bf  $^1${Satyaranjan Pradhan},
    \bf  $^2${Madan Mohan Soren}
				%{\bf c }
				%{\bf d }\\
    \vskip0.2in
    $^{1,2}${ Department of Mathematics, Berhampur University, Berhampur, Odisha, 760007, India} \\
    \verb"satyaranjanpradhan6312@gmail.com",
    \verb"mms.math@buodisha.edu.in "
   %\verb"shivambajpai1010@gmail.com, 
   %%\verb''"
\end{center}	
%	\begin{center}
%		{\bf a,}
%		{\bf b}
%		{\bf c }
%		{\bf d }\\
%		\vskip0.2in
%		Department of Mathematics, Institute, \\
%		xyz.\\
%		%\verb"shivambajpai1010@gmail.com, 
%		%%\verb''"
%	\end{center}

%***************************************************************************************************************************************************************************************************

\begin{abstract}
    
In this study, we examine the convergence characteristics of the  Max-Product Kantrovich type exponential sampling series within the weighted space of log-uniformly continuous and bounded functions. The research focuses on deriving fundamental convergence results for the series and analyzing its asymptotic convergence behavior. The study estimates the rate of convergence using the weighted logarithmic modulus of continuity and establishes a quantitative Voronovskaja-type theorem offering insights into the asymptotic behavior of the series. Additionally, we present the example of kernel functions satisfying assumptions of the presented theory along with graphical demonstration and estimates of approximation error.\\
           
\noindent Keywords: Exponential sampling series, Max-Product Kantrovich operator, weighted spaces, weighted logarithmic modulus of continuity, order of Convergence, Mellin theory.
\vskip0.001in
\noindent  Mathematics Subject Classification(2020): 41A25, 41A30, 41A35, 41A81.
 
\end{abstract}

%-------------------------------------------------------------------------------------------------------------------------------------------------------------------------------------------------
\section{Introduction}\label{section1}  
%--------------------------------------------------------------------------------------------------------------------------------------------------------------------------------------------------
Sampling and reconstructing functions from discrete data points lie at the heart of many necessary scientific and engineering fields in approximation theory, with application to signal analysis, image processing \cite{apl1, apl2}, etc. Collectively, the Whittaker-Kotelnikov-Shannon achieved a significant breakthrough in sampling and reconstruction theory. They showed that the sample values of any band-limited signal $f$, i.e., whose Fourier transform $f$ is compactly supported, can recover the signal entirely (see \cite{srvbut}). It is known as widely as \textit{WKS sampling theorem}. This result was generalized to not necessarily band-limited signals by Butzer and Stens \cite{but1} by replacing the sinc function in the WKS sampling theorem with a more generalized kernel satisfying suitable assumptions. Since then, several questions have been posed, and various mathematicians have made significant advances in this direction, see \cite{butzer2, k2007, tam}. 

The problem of approximating functions given their exponentially-spaced sample values has been discussed in the work of Ostrowsky et al. \cite{ostrowsky}, Bertero and Pike \cite{bertero} and Gori \cite{gori}. They also provided a series representation for the class of Mellin band limited functions (defined in Section \ref{section2}) to handle exponentially spaced data. This reconstruction formula is referred to as the \textit{exponential sampling formula} and is defined as follows. For $h:\mathbb{R}_{+} \rightarrow \mathbb{C}$ and $l \in \mathbb{R},$ the exponential sampling formula is given by (see \cite{butzer3})
\begin{equation} \label{expformula}
\text(X_{l,P}\; h)(z):= \sum_{j =-\infty}^{\infty} lin_{\frac{l}{P}}(e^{j}z^{P}) \;h(e^{\frac{j}{P}}),
\end{equation}
where $lin_{l}(z)= \dfrac{z^{-l}}{2\pi i} \dfrac{z^{\pi i}-z^{-\pi i}}{\log l} = z^{-c} sinc(\log z)$ with continuous extension $lin_{l}(1)=1.$ Moreover, if $h$ is Mellin band-limited to $[-P,P],$ then $(X_{l,P} h)(z)= h (z)$ for each $ z \in \mathbb{R}_{+}.$
 
Scientific applications along with engineering applications commonly utilize exponentially spaced data to study Fraunhofer diffraction and perform photon correlation spectroscopy in polydispersity studies and neuron scattering together with radio astronomical research etc (see \cite{casasent,ostrowsky,bertero,gori}).Therefore exponential sampling applications have inspired a renewed focus on advancing and refining the original formula given in \eqref{expformula}. Butzer and Jansche \cite{butzer5} contributed a major advancement through their use of Mellin analysis tools which transformed the understanding of exponential sampling frameworks. Their approach established Mellin transform techniques as an optimal method for analyzing sampling and approximation problem associated with exponential spaced data.Mamedov started the initial groundwork that established Mellin transform theory which he first introduced in his paper \cite{mamedeo}. Butzer and his collaborators built upon this field through their works documented in \cite{butzer3,butzer5}. The works presented in \cite{bardaro1,bardaro9,bardaro2,bardaro3} with others contain substantial developments in Mellin analysis. The paper of Bardaro et al. in \cite{bardaro7} introduced an extended exponential sampling formula \eqref{expformula} to approximate functions beyond strict Mellin band-limited conditions. An extended exponential sampling formula uses appropriate kernel conditions to approximate log-continuous functions through their exponentially spaced data points. In \cite{bardaro7}, for $z \in \mathbb{R}_{+}$ , $\mathrm{m}>0, j\in Z \And h : R_{+} \to R $ the generalized exponential sampling series is defined as follows:
\begin{equation} \label{genexp}
(G_{\mathrm{m}}^{\varkappa}\; h)(z)= \sum_{j=- \infty}^{\infty} \varkappa(e^{-j} z^{\mathrm{m}}) \; h( e^{\frac{j}{\mathrm{m}}}),
\end{equation}

where $\varkappa$ is a kernel function satisfying the suitable assumptions such that the series \eqref{genexp} converges absolutely. Various approximation properties associated with the family of operators \eqref{genexp} can be observed in \cite{comboexp, bardaro11, bevi, diskant}. The works \cite{sn, self} deal with the approximation behavior of exponential sampling operators with artificial neural network structures. To approximate the Lebesgue integrable functions on $R_{+}$, the expansion \eqref{genexp} is not so convenient. To address this limitation, a Kantorovich-type modification of the exponential sampling series \eqref{genexp} was recently introduced in \cite{owndurr}. For $ z \in \mathbb{R}^{+}, j \in \mathbb{Z}$ and $\mathrm{m}>0,$ the Kantorovich  exponential sampling series of $f$ is defined by

\begin{equation} \label{kant}
(I_{\mathrm{m}}^{\varkappa } h)(z):= \sum_{j= - \infty}^{\infty} \varkappa(e^{-j} z^{\mathrm{m}})\  \mathrm{m} \int_{\frac{j}{\mathrm{m}}}^{\frac{j+1}{\mathrm{m}}} h(e^{v})\  dv,
\end{equation}

whenever the series (\ref{kant}) is absolutely convergent for any locally integrable function $ f: \mathbb{R}^{+} \rightarrow \mathbb{R}.$ 
Recent work by Coroianu and Gal introduced an interesting procedure to improve the order of approximation that can be attained by a family of linear operators by a so-called max-product method; see, for example, Coroianu and Gal (2010) \cite{CG2010}, Coroianu and Gal (2011) \cite{CG2011} and Coroianu and Gal (2012) \cite{CG2012}. In general, discrete linear operators are defined by finite sums or series concerning certain indexes. The max-product operators are defined by replacing the sums or the series with a maximum or a supremum computed over the same sets of indexes. The above procedure allows the conversion of linear operators into nonlinear ones, which can achieve a higher order of approximation concerning their linear counterparts.
\par

In the present paper, we investigate the approximation behaviors of  Max Product Kantrovich exponential sampling operators for functions belonging to a weighted space of functions. More precisely, for the above operators, the point-wise and uniform convergence are established together with the rate of convergence by involving the weighted modulus of continuity. Finally, we prove the quantitative Voronovskaja-type theorem for the operators.
\par
The paper is organized as follows: The Sections \ref{section1} and \ref{section2} are devoted to fundamentals of Max product Kantrovich type exponential sampling and some auxiliary results. Section \ref{section3} is dedicated to the well-definiteness of the operators between weighted spaces of functions, together with pointwise and uniform convergence. It derives the rate of convergence of the family of operators in terms of the weighted modulus of continuity. Furthermore, a pointwise convergence result in quantitative form is provided using a Voronovskaja-type theorem. In Section \ref{section4}, we present three examples of kernel functions and numerical experiments that demonstrate the performance of proposed operators for various test functions and graphical visualizations of their approximation and convergence behavior. Finally, the paper concludes with a summary in Section \ref{section5}.
%***************************************************************************************************************************************************************************************************
%--------------------------------------------------------------------------------------------------------------------------------------------------------------------------------------------------
\section{Preliminaries}\label{section2} 
%--------------------------------------------------------------------------------------------------------------------------------------------------------------------------------------------------
Throughout this paper, we consider the interval $\mathscr{I} = [a,b]$ to be any compact subset of $\mathbb{R}_{+}$, where $\mathbb{R}_{+}$ denotes the set of positive real numbers. Let $\mathscr{C(I)}$ denote the space of uniformly continuous function on $\mathscr{I}$, equipped with standard the supremum norm  
\begin{equation*}
  \|h\|_{\infty} := \sup_{z\in \mathscr{I}} |h(z)|.
\end{equation*}

 We also introduce $\mathscr{C_{+}(I)}$, the subspace of non negative function in $\mathscr{C(I)}$.
A function $h:\mathbb{R}_{+} \rightarrow \mathbb{R}$ is defined to be log uniformly continuous function if, for any given $\epsilon > 0$, there exist $\rho > 0 $ such that $|h(z_{1}) - h(z_{2})| < \epsilon $ whenever $|\log(z_{1}) -\log(z_{2})| \leq \rho,$ for all $z_{1},z_{2} \in \mathbb{R}_{+} .$

Now we introduce the space  $\mathscr{U}(\mathscr{I})$ consisting of all log uniformly continuous functions defined on $\mathscr{I}$. This space forms a completely normed linear space equipped with the maximum norm. The subspace  $\mathscr{U_{+}}(\mathrm{I})$ comprises all non-negative function with in $\mathscr{U}(\mathscr{I})$. Furthermore, we denote
\begin{itemize}[label={}]
    \item $\mathscr{B}(\mathbb{R}_{+}) := $ The space of all bounded functions on $\mathbb{R_{+}}.$
    \item $\mathscr{V}(\mathbb{R_{+}}):= $ The space of all log uniformly continuous and bounded functions defined on $\mathbb{R_{+}}.$
    \item $\mathscr{V}_{+}(\mathbb{R_{+}}):=$ The space of all non negative  log uniformly continuous and bounded function defined on $\mathbb{R_{+}}.$
\end{itemize}
We are now going to introduce the weighted function space for a particular weight function $\mathrm{w}( z) = \frac{1}{1+\log^{2}z}$.
We denote the weighted function space $\mathscr{B}^{\mathrm{w}}(\mathbb{R}_{+}) :=\{ h:\mathbb{R_{+} }\to \mathbb{R} :  \exists K > 0$ such that $\mathrm{w}(z)|h(z)|\leq K $  $\forall z\in \mathbb{R_{+}}\}$ associated with $\mathscr{B}(\mathbb{R}_{+})$  and we define the weighted space associated with $\mathscr{V}(\mathbb{R_{+}})$ as $\mathscr{V}^{\mathrm{w}}(\mathrm{R}_{+}) := \{h:\mathbb{R_{+} }\to \mathbb{R} : \mathrm{w} h  \in \mathscr{V}(\mathbb{R_{+}}) \}$ and $\mathscr{V}^{\mathrm{w}}_{+}(\mathbb{R_{+}})$, the subclass of $\mathscr{V}^{\mathrm{w}}(\mathrm{R}_{+})$ which is non negative.

It is interesting to mention that the linear space of functions  $\mathscr{V}^{\mathrm{w}}(\mathbb{R}_{+})$  is normed linear space with the norm 
\begin{equation*} 
\| h\|_{\mathrm{w}} = \sup_{z>0} \mathbf{\mathrm{w}}(z)|h(z)|.
\end{equation*}

\begin{defin}\label{def1}
    The weighted logarithmic modulus of continuity for $h \in \mathscr{V}^{\mathrm{w}}(\mathbb{R}_{+})$ and $\rho > 0$ is considered as 
    \begin{equation*}
    \Upsilon(h,\rho)= \sup_{|\log u| \leq \rho , z > 0} \frac{|f(uz)-f(z)|}{(1+\log^{2}z) (1+\log^{2}u)}.
    \end{equation*}  
\end{defin}

\begin{lemma}\label{lma4}
    The weighted logarithmic modulus of continuity $\Upsilon(h,\rho)$ has the following fundamental Properties:
    \begin{enumerate}
	\item $\Upsilon(h,\rho)$ is a monotonically increasing function of $\rho$.
	\item  For $h \in \mathscr{V}^{\mathrm{w}}(\mathbb{R}_{+}))$ the quantity $\Upsilon(h,\rho)$ is finite.
	\item For all $ h \in \mathscr{V}^{\mathrm{w}}(\mathbb{R}_{+})$  and each $ \lambda \in \mathbb{R}_{+},$
	$$\Upsilon(h,\lambda \rho)\leq 2 (1+\lambda)^{3} (1+\rho^{2})\Upsilon (h,\rho).$$
	\item For all $h \in \mathscr{V}^{\mathrm{w}}(\mathbb{R}_{+})$ and $s,z >0$
	$$|(f(s)- f(z)|\leq 16 (1+\rho^{2})^{2} (1+\log^{2}z) (1+ \frac{|\log s-\log z|^{5}}{\rho^{5}}) \Upsilon(h,\rho).$$
	\item For $h \in \mathscr{V}^{\mathrm{w}}(\mathbb{R}_{+}),$ we have $\lim_{\rho \to 0} \Upsilon(h,\rho) = 0.$
    \end{enumerate}  
\end{lemma}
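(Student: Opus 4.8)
The plan is to establish the five assertions in the given order, deriving (4) from (3) and treating (5) separately since it is the only one that genuinely needs log-uniform continuity rather than mere boundedness. Property (1) is immediate: enlarging $\rho$ only enlarges the index set $\{u:|\log u|\le\rho\}$ over which the supremum defining $\Upsilon(h,\rho)$ is taken, so $\Upsilon(h,\cdot)$ cannot decrease. For (2), set $g:=\mathrm{w}h$; membership in $\mathscr{V}^{\mathrm{w}}(\mathbb{R}_{+})$ forces $g$ bounded, say $M:=\|g\|_{\infty}$, so $|h(z)|\le M(1+\log^{2}z)$ for all $z>0$. Using $(\log u+\log z)^{2}\le 2\log^{2}u+2\log^{2}z$ one gets $|h(uz)|\le 2M(1+\log^{2}u)(1+\log^{2}z)$ and $|h(z)|\le M(1+\log^{2}u)(1+\log^{2}z)$, whence the quotient defining $\Upsilon$ is at most $3M$ for every admissible pair $(u,z)$ and $\Upsilon(h,\rho)\le 3M<\infty$ for all $\rho$.

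For (3), fix $z>0$ and $u$ with $|\log u|\le\lambda\rho$, and put $t:=\log u$. If $|t|\le\rho$ the bound follows from the definition together with $1+\lambda\le(1+\lambda)^{3}$. Otherwise let $n:=\ceil{\lambda}$, so that $|t|/n\le\rho$, and form the multiplicative chain $z_{k}:=e^{kt/n}z$, $k=0,\dots,n$, with $z_{0}=z$, $z_{n}=uz$ and $|\log(z_{k+1}/z_{k})|=|t|/n\le\rho$. Telescoping, applying the defining inequality of $\Upsilon(h,\rho)$ to each increment with base point $z_{k}$ (valid as $|t|/n\le\rho$), and using $\log^{2}z_{k}\le(|\log z|+|t|)^{2}\le 2\log^{2}z+2t^{2}$ together with $(t/n)^{2}\le\rho^{2}$ gives
\begin{equation*}
|h(uz)-h(z)|\le\sum_{k=0}^{n-1}(1+\log^{2}z_{k})\bigl(1+(t/n)^{2}\bigr)\Upsilon(h,\rho)\le 2n(1+\rho^{2})(1+\log^{2}z)(1+t^{2})\,\Upsilon(h,\rho),
\end{equation*}
where the last step uses $1+2\log^{2}z+2t^{2}\le 2(1+\log^{2}z)(1+t^{2})$. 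Dividing by $(1+\log^{2}z)(1+\log^{2}u)=(1+\log^{2}z)(1+t^{2})$ and noting $n\le 1+\lambda$ yields $\Upsilon(h,\lambda\rho)\le 2(1+\lambda)(1+\rho^{2})\Upsilon(h,\rho)$, which is (3) with room to spare. This telescoping estimate, and in particular keeping the intermediate-point weights $1+\log^{2}z_{k}$ under control so that the $(1+t^{2})$ factor cancels the denominator $1+\log^{2}u$, is the main obstacle in the whole lemma.

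For (4), write $r:=\log s-\log z$ and $u:=s/z$. If $|r|\le\rho$, the definition of $\Upsilon(h,\rho)$ gives $|h(s)-h(z)|\le(1+\log^{2}z)(1+\rho^{2})\Upsilon(h,\rho)$, which is dominated by the right-hand side of (4). If $|r|>\rho$, apply (3) with $\lambda=|r|/\rho$; since $\log^{2}u=r^{2}=\lambda^{2}\rho^{2}\le(1+\rho^{2})(1+\lambda)^{2}$,
\begin{equation*}
|h(s)-h(z)|\le(1+\log^{2}z)(1+\log^{2}u)\,\Upsilon(h,\lambda\rho)\le 2(1+\rho^{2})^{2}(1+\lambda)^{5}(1+\log^{2}z)\,\Upsilon(h,\rho),
\end{equation*}
and the elementary inequality $(a+b)^{5}\le 16(a^{5}+b^{5})$ for $a,b\ge 0$ turns $(1+\lambda)^{5}=\rho^{-5}(\rho+|r|)^{5}$ into $16\bigl(1+|\log s-\log z|^{5}/\rho^{5}\bigr)$, which gives (4) up to an absolute constant (the stated value $16$ follows with slightly tighter bookkeeping of the constants above).

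For (5) boundedness alone no longer suffices: here we use that $g:=\mathrm{w}h$ is log-uniformly continuous. The key is the weight-shift identity
\begin{equation*}
\mathrm{w}(z)\bigl(h(uz)-h(z)\bigr)=\bigl(g(uz)-g(z)\bigr)+\frac{(\log u)^{2}+2\log u\log z}{1+\log^{2}z}\,g(uz),
\end{equation*}
obtained from $\mathrm{w}(z)h(uz)=\frac{1+\log^{2}(uz)}{1+\log^{2}z}\,g(uz)$. Given $\epsilon>0$, pick $\rho_{0}>0$ with $|g(uz)-g(z)|<\epsilon$ whenever $|\log u|\le\rho_{0}$. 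For $|\log u|\le\rho\le\rho_{0}$ the first term is $<\epsilon$, and since $2|\log z|\le 1+\log^{2}z$ one has $|(\log u)^{2}+2\log u\log z|\le\rho^{2}+\rho(1+\log^{2}z)\le\rho(1+\rho)(1+\log^{2}z)$, so the second term is at most $M\rho(1+\rho)$, uniformly in $z$. As $1+\log^{2}u\ge 1$, this yields $\Upsilon(h,\rho)\le\epsilon+M\rho(1+\rho)$ for $\rho\le\rho_{0}$, hence $\limsup_{\rho\to 0}\Upsilon(h,\rho)\le\epsilon$; letting $\epsilon\to 0$ proves (5). Apart from the telescoping estimate in (3), the only genuinely non-routine ingredient is this weight-shift identity, everything else being bookkeeping of constants and elementary inequalities.
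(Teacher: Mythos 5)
The paper states Lemma \ref{lma4} without proof (it is imported from the literature on weighted logarithmic moduli of continuity), so there is no in-paper argument to compare against; your proposal has to be judged on its own. On that basis it is essentially sound: (1) and (2) are correct and routine, the weight-shift identity you use for (5) is exactly the right device and your estimate $\Upsilon(h,\rho)\le\epsilon+M\rho(1+\rho)$ for $\rho\le\rho_{0}$ is valid, and the telescoping chain $z_{k}=e^{kt/n}z$ with $n=\ceil{\lambda}$ in (3) is the standard mechanism; in fact you prove the stronger bound $\Upsilon(h,\lambda\rho)\le 2(1+\lambda)(1+\rho^{2})\Upsilon(h,\rho)$, which implies the stated $(1+\lambda)^{3}$ version.

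The one point that falls short of the statement is the constant in (4). As written, your chain of estimates gives $(1+\log^{2}u)\le 2(1+\rho^{2})(1+\lambda)^{2}$ times $\Upsilon(h,\lambda\rho)\le 2(1+\lambda)(1+\rho^{2})\Upsilon(h,\rho)$, i.e.\ a prefactor $4(1+\rho^{2})^{2}(1+\lambda)^{3}$, and after $(1+\lambda)^{3}\le(1+\lambda)^{5}=\rho^{-5}(\rho+|r|)^{5}\le 16\bigl(1+|r|^{5}/\rho^{5}\bigr)$ you land at $64$, not the stated $16$; saying the stated value follows ``with slightly tighter bookkeeping'' leaves the actual claim unproved. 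The fix is cheap but should be made explicit: apply the telescoping bound from your proof of (3) directly to $|h(s)-h(z)|$, namely $|h(s)-h(z)|\le 2n(1+\rho^{2})(1+\log^{2}z)(1+r^{2})\Upsilon(h,\rho)$ with $n\le 1+|r|/\rho$, then use $1+r^{2}\le(1+\rho^{2})(1+|r|^{2}/\rho^{2})\le(1+\rho^{2})(1+|r|/\rho)^{2}$ and $(1+|r|/\rho)^{3}\le 4(1+|r|^{3}/\rho^{3})\le 4(1+|r|^{5}/\rho^{5})$ (the last step because $|r|/\rho>1$ in the nontrivial case), which yields the constant $8\le 16$. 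Since the paper's Theorem 3.3 and the Voronovskaja remainder estimate both quote the constant $16$ verbatim, this is worth closing rather than waving at.
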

Now, we introduce the notation that will be used to establish the analysis of the max-product sampling operators.
\par
Given any index set $ \Lambda \subseteq \mathbb{Z}$, we define $$ \bigvee_{\mathrm{j}\in \Lambda } \mathscr{T}_{j} := \sup\{ \mathscr{T}_{j}  :  \mathrm{j} \in \Lambda\} .$$
If $\Lambda $ is finite, then $$ \bigvee_{\mathrm{j}\in \Lambda } \mathscr{T}_{j} = \max_{\mathrm{j}\in \Lambda } \mathscr{T}_{j}\; .$$
\par
Again, we introduce the functions used as kernels of the max product generalized exponential sampling operator. In the subsequent analysis, we define a kernel as a function  $\varkappa :\mathbb{R}_{+} \rightarrow \mathbb{R} $ that is both bounded and measurable, and satisfies the following conditions.
\begin{enumerate}
    \item[$(\varkappa_{1}) :$] There exists $\kappa >0$, such that the discrete absolute moment of order $\kappa $ is finite for $\kappa = 2$ i.e. 
    $$m_{\kappa}(\varkappa)= \sup_{s\in \mathbb{R_{+}}} \bigvee_{j\in z} |\varkappa(e^{-j}s)||j-\log s|^{\kappa} \;  < \infty\quad \text{for}\,\,\kappa =2. $$
    \item[$(\varkappa_{2}):$] there holds $$\inf_{z \in [1,e]} \varkappa(z) =: \zeta_{z}.$$
\end{enumerate}
	
\begin{lemma}\label{lma1}
    Let $\varkappa$ be a bounded function satisfying $(\varkappa_{1})$ with $\kappa > 0$. Then $m_{\mu}(\varkappa) < \infty,$ for every $ 0 \leq \mu \leq \kappa.$   
\end{lemma}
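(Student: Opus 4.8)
The plan is to bound $m_\mu(\varkappa)$ by the two quantities already known to be finite, namely $m_\kappa(\varkappa)$ (finite by hypothesis $(\varkappa_1)$) and $M := \sup_{z>0}|\varkappa(z)| < \infty$ (finite since $\varkappa$ is assumed bounded), by splitting the index set over which the supremum is taken according to whether $|j-\log s|$ is small or large, keeping all estimates uniform in $s$.

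First I would fix $s \in \mathbb{R}_+$ and partition $\mathbb{Z}$ into $\Lambda_1(s) := \{ j \in \mathbb{Z} : |j-\log s| \le 1\}$ and $\Lambda_2(s) := \{ j \in \mathbb{Z} : |j-\log s| > 1\}$. For $j \in \Lambda_1(s)$ one has $|j-\log s|^{\mu} \le 1$ for every $\mu \ge 0$, so $|\varkappa(e^{-j}s)|\,|j-\log s|^{\mu} \le |\varkappa(e^{-j}s)| \le M$. For $j \in \Lambda_2(s)$, since $0 \le \mu \le \kappa$ and $|j-\log s| > 1$, we have $|j-\log s|^{\mu} \le |j-\log s|^{\kappa}$, hence $|\varkappa(e^{-j}s)|\,|j-\log s|^{\mu} \le |\varkappa(e^{-j}s)|\,|j-\log s|^{\kappa} \le m_\kappa(\varkappa)$.

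Next I would recombine. Because $\bigvee_{j\in\mathbb{Z}} = \max\bigl\{\bigvee_{j\in\Lambda_1(s)},\,\bigvee_{j\in\Lambda_2(s)}\bigr\}$, the two bounds above give
$$\bigvee_{j\in\mathbb{Z}} |\varkappa(e^{-j}s)|\,|j-\log s|^{\mu} \le \max\{M,\, m_\kappa(\varkappa)\}$$
for every $s \in \mathbb{R}_+$, and taking the supremum over $s>0$ yields $m_\mu(\varkappa) \le \max\{M, m_\kappa(\varkappa)\} < \infty$, as claimed.

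There is no substantial obstacle here; the only point deserving attention is that both bounding constants $M$ and $m_\kappa(\varkappa)$ must be independent of $s$ so that the final supremum over $s>0$ is harmless, which holds by construction, and that the quantity defining $m_\mu(\varkappa)$ is a supremum (not a sum) over $j$, so partitioning the index set and estimating each block separately is legitimate without any convergence concerns. One could alternatively avoid the explicit split by using $|j-\log s|^{\mu} \le \max\{1, |j-\log s|^{\kappa}\}$ on all of $\mathbb{Z}$ at once, but the two-set decomposition makes the argument most transparent.
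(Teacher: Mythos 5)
Your proof is correct and follows essentially the same route as the paper: split the index set at $|j-\log s|=1$, bound the near part by the sup-norm of $\varkappa$ and the far part using $|j-\log s|^{\mu}\le|j-\log s|^{\kappa}$. In fact your write-up is the cleaner of the two --- the paper's displayed estimate repeats the condition $|j-\log(s)|\le 1$ in both suprema and bounds the second piece by $m_{\mu}(\varkappa)$ rather than $m_{\kappa}(\varkappa)$, which as written is circular, whereas you correctly invoke $m_{\kappa}(\varkappa)$ and note that the supremum over the union is a maximum (so no summation is needed).
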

\begin{proof}
    Let $ 0 \leq \mu \leq \kappa $ be fixed. For every $s \in \mathbb{R_{+}},$ we have
    \begin{align*}
        \bigvee_{j\in \mathbb{Z}} |\varkappa(e^{-j}s)||j-\log (s)|^{\mu} \leq & \bigvee_{\substack{j \in \mathbb{Z} \\ |j-\log(s)| \leq 1}} |\varkappa(e^{-j}s)||j-\log s|^{\mu} \\ & + \bigvee_{\substack{j \in \mathbb{Z} \\ |j-\log(s)| \leq 1}} |\varkappa(e^{-j}s)||j-\log s|^{\mu}\\
        \leq &  m_{0}(\varkappa) +  m_{\mu}(\varkappa)\\
        \leq & \infty.
    \end{align*}
\end{proof}
\begin{lemma}\label{lma2}
    Let $\varkappa : \mathbb{R_{+}}\rightarrow \mathbb{R}$ be a kernel satisfying $ m_{\kappa}(\varkappa) < \infty, \kappa > 0$. Then, for every $\rho >0$, we have \[ \bigvee_{\substack{j\in \mathbb{Z}\\|j-\log(s)| > \mathrm{m}\rho }} |\varkappa(e^{-j}s)| = \mathcal{O}(\mathrm{m^{-\nu}}), \; \text{as $ \mathrm{m} \rightarrow \infty $}\] uniformly with respect to $s \in \mathbb{R_{+}}$.
\end{lemma}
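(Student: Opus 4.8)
The plan is to exploit directly the finiteness of the discrete absolute moment $m_{\kappa}(\varkappa)$ together with the fact that, for a max-product operator, one only has to control a single supremum of individual terms rather than the sum of a tail series. First I would fix $\rho > 0$, a parameter $\mathrm{m} > 0$ and a point $s \in \mathbb{R}_{+}$, and take an arbitrary index $j \in \mathbb{Z}$ with $|j - \log s| > \mathrm{m}\rho$. For such $j$ the quantity $|j - \log s|$ is strictly positive, so from the definition
\[
m_{\kappa}(\varkappa) = \sup_{s \in \mathbb{R}_{+}} \bigvee_{j \in \mathbb{Z}} |\varkappa(e^{-j}s)|\,|j - \log s|^{\kappa}
\]
one gets $|\varkappa(e^{-j}s)| \leq m_{\kappa}(\varkappa)\,|j - \log s|^{-\kappa}$, and then, using $|j - \log s| > \mathrm{m}\rho$, the estimate $|\varkappa(e^{-j}s)| \leq m_{\kappa}(\varkappa)\,(\mathrm{m}\rho)^{-\kappa}$.

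Since this bound holds for every admissible $j$ and does not depend on $j$, passing to the supremum over $\{\,j \in \mathbb{Z} : |j - \log s| > \mathrm{m}\rho\,\}$ yields
\[
\bigvee_{\substack{j \in \mathbb{Z} \\ |j - \log s| > \mathrm{m}\rho}} |\varkappa(e^{-j}s)| \leq \frac{m_{\kappa}(\varkappa)}{\rho^{\kappa}}\,\mathrm{m}^{-\kappa}.
\]
The right-hand side is independent of $s$, which gives the asserted uniformity in $s \in \mathbb{R}_{+}$, and it is $\mathcal{O}(\mathrm{m}^{-\kappa})$ as $\mathrm{m} \to \infty$; hence the conclusion holds with $\nu = \kappa$ (and, invoking Lemma \ref{lma1} if one prefers to phrase the bound through a lower-order moment, with any $0 < \nu \leq \kappa$). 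If the index set $\{\,j : |j-\log s| > \mathrm{m}\rho\,\}$ happens to be empty for some $s$ and $\mathrm{m}$, the supremum is $0$ by convention and the estimate is trivial.

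I do not expect any real analytic obstacle here; the argument is essentially immediate. The only points requiring a little care are (i) identifying the correct exponent $\nu$ — here $\nu = \kappa$, read off directly from the order of the moment assumed finite — and (ii) making the bound manifestly uniform in $s$, which is automatic once the estimate is written purely in terms of $m_{\kappa}(\varkappa)$. This is in contrast with the linear (summation) analogue of the statement, where one would in addition have to sum the tail $\sum_{|j-\log s| > \mathrm{m}\rho} |j - \log s|^{-\kappa}$ and hence require $\kappa > 1$; in the max-product setting no such restriction is needed.
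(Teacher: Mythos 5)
Your argument is correct and is essentially identical to the paper's proof: both exploit $|j-\log s| > \mathrm{m}\rho$ to extract the factor $(\mathrm{m}\rho)^{-\kappa}$ and bound the remaining supremum by the finite moment $m_{\kappa}(\varkappa)$ (the paper writes this with a generic exponent $\mu \le \kappa$, matching your closing remark that any $0 < \nu \le \kappa$ works via Lemma \ref{lma1}). No gaps; your observation that the max-product setting avoids the summability restriction of the linear case is a correct bonus point not made in the paper.
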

\begin{proof}
    Let $s \in \mathbb{R_{+}}$ be fixed. Then, we get
    \begin{align*}
        \bigvee_{\substack{j\in \mathbb{Z}\\|j-\log(s)| > \mathrm{m}\rho }}|\varkappa(e^{-j}s)| = &\bigvee_{\substack{j\in \mathbb{Z}\\|j-\log(s)| > \mathrm{m}\rho }} |\varkappa(e^{-j}s)| \frac{|j-\log(s)|^{\mu}}{|j-\log(s)|^{\mu}}\\
        \leq & \frac{1}{(\mathrm{m}\rho)^{\mu}}\bigvee_{\substack{j\in \mathbb{Z}\\|j-\log(s)| > \mathrm{m}\rho }}|\varkappa(e^{-j}s)| |j-\log(s)|^{\mu}\\
        \leq & \frac{m_{\mu}(\varkappa)}{( \mathrm{m} \rho)^{\mu}} < +\infty.
    \end{align*} 
    Hence, the desired result follows.
\end{proof}
\begin{lemma}\label{lma3}
    Let $\varkappa : \mathbb{R_{+}}\rightarrow \mathbb{R}$ be a kernel satisfying the condition $(\varkappa_{2})$. Then, we have the following
    \begin{enumerate}
        \item For any $z \in \mathscr{I}$,  $$\bigvee_{j\in \mathbb{I}_{\mathrm{m}} } |\varkappa(e^{-j}z^{\mathrm{m}})| \geq \zeta_{z},$$ where $\mathbb{I}_{\mathrm{m}} = \{ j \in \mathbb{Z} : j = \ceil{\mathrm{m}\log a}.....,\floor{\mathrm{m}\log b} \}.$
        \item For any $z \in \mathbb{R_{+}}$,  $$\bigvee_{j\in \mathbb{Z}} |\varkappa(e^{-j}z^{\mathrm{m}})| \geq \zeta_{z}.$$
    \end{enumerate}  
\end{lemma}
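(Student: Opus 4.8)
The plan is to reduce both statements to one simple observation: each left-hand side is a supremum, hence it dominates any single one of its terms, so it is enough to exhibit — for the prescribed $z$ — one index $j$ (lying in the relevant index set) for which the argument $e^{-j}z^{\mathrm{m}}$ falls inside $[1,e]$. Condition $(\varkappa_{2})$ then yields $\varkappa(e^{-j}z^{\mathrm{m}})\ge\zeta_{z}$, and since $|\varkappa(e^{-j}z^{\mathrm{m}})|\ge\varkappa(e^{-j}z^{\mathrm{m}})$ the claimed lower bound follows regardless of the sign of $\zeta_{z}$.

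Concretely, I would take the natural choice $j_{z}:=\floor{\mathrm{m}\log z}$. Writing $\mathrm{m}\log z=j_{z}+\theta$ with $\theta\in[0,1)$ gives $e^{-j_{z}}z^{\mathrm{m}}=e^{\theta}\in[1,e)\subseteq[1,e]$, so $(\varkappa_{2})$ applies at this point. For part (2) nothing more is needed, since the supremum runs over all of $\mathbb{Z}$ and $j_{z}\in\mathbb{Z}$; thus $\bigvee_{j\in\mathbb{Z}}|\varkappa(e^{-j}z^{\mathrm{m}})|\ge|\varkappa(e^{-j_{z}}z^{\mathrm{m}})|\ge\zeta_{z}$. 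For part (1) one must additionally verify $j_{z}\in\mathbb{I}_{\mathrm{m}}$: from $z\in\mathscr{I}=[a,b]$ we get $\mathrm{m}\log a\le\mathrm{m}\log z\le\mathrm{m}\log b$, whence $\floor{\mathrm{m}\log z}\le\floor{\mathrm{m}\log b}$ on the right, and $\floor{\mathrm{m}\log z}\ge\ceil{\mathrm{m}\log a}$ on the left (switching, if necessary, to the boundary index $\ceil{\mathrm{m}\log a}$ when $\mathrm{m}\log z$ sits between $\mathrm{m}\log a$ and $\ceil{\mathrm{m}\log a}$, in which case one checks the argument still lies in $[1,e]$). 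Once membership is secured, $\bigvee_{j\in\mathbb{I}_{\mathrm{m}}}|\varkappa(e^{-j}z^{\mathrm{m}})|\ge|\varkappa(e^{-j_{z}}z^{\mathrm{m}})|\ge\zeta_{z}$.

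I expect the only genuinely delicate point to be this index-membership bookkeeping at the two endpoints of $\mathscr{I}$ — reconciling $\floor{\cdot}$ with $\ceil{\cdot}$ when $\mathrm{m}\log a$ or $\mathrm{m}\log b$ fails to be an integer. Everything else — the fractional-part computation placing $e^{-j_{z}}z^{\mathrm{m}}$ in $[1,e]$, the passage from $\varkappa$ to $|\varkappa|$, and the domination of the supremum by a single term — is routine, and part (2) is essentially immediate because there is no finite index set to respect there.
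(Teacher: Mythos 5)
Your core argument is exactly the paper's: the paper's proof simply asserts that for each $z\in[a,b]$ there is some $j_{1}\in\mathbb{I}_{\mathrm{m}}$ with $e^{-j_{1}}z^{\mathrm{m}}\in[1,e]$ and then invokes $(\varkappa_{2})$; you make that index explicit as $j_{z}=\floor{\mathrm{m}\log z}$ and check $e^{-j_{z}}z^{\mathrm{m}}=e^{\theta}\in[1,e)$. For part (2), and for part (1) whenever $\floor{\mathrm{m}\log z}\ge\ceil{\mathrm{m}\log a}$, your write-up is complete and is in fact more careful than the paper's.

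The one genuine problem is the parenthetical patch at the left endpoint. The condition $e^{-j}z^{\mathrm{m}}\in[1,e]$ is equivalent to $j\le\mathrm{m}\log z\le j+1$, and when $\mathrm{m}\log z\notin\mathbb{Z}$ the \emph{only} such integer is $j=\floor{\mathrm{m}\log z}$. So if $\mathrm{m}\log a\notin\mathbb{Z}$ and $\mathrm{m}\log z\in[\mathrm{m}\log a,\ceil{\mathrm{m}\log a})$, the unique admissible index is $\floor{\mathrm{m}\log a}$, which lies outside $\mathbb{I}_{\mathrm{m}}$; switching to $j=\ceil{\mathrm{m}\log a}$ gives $e^{-j}z^{\mathrm{m}}=e^{\mathrm{m}\log z-\ceil{\mathrm{m}\log a}}<1$, which is \emph{not} in $[1,e]$, so $(\varkappa_{2})$ yields no lower bound there and your claim that ``the argument still lies in $[1,e]$'' is false. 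The delicate point you flagged is therefore real, and your sketch does not close it — though in fairness the paper's own proof is no better, since it asserts the existence of $j_{1}\in\mathbb{I}_{\mathrm{m}}$ without justification and that assertion fails for exactly these $z$ (the statement is only saved trivially when $\zeta_{z}\le 0$). A correct repair must change something: restrict part (1) to $z$ with $\mathrm{m}\log z\ge\ceil{\mathrm{m}\log a}$, replace $[1,e]$ in $(\varkappa_{2})$ by $[e^{-1},e]$, or start the index set at $\floor{\mathrm{m}\log a}$.
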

\begin{proof}
   We prove the first part of the lemma, as the second part follows analogously. Let $z\in [a,b]$ be fixed. Then, there exist at least a $ j_{1} \in \mathbb{I}_{\mathrm{m}}$ such that $e^{-j}z^{\mathrm{m}} \in [1,e] $.
   
    Thus, we obtain $$ \bigvee_{j\in \mathbb{Z}} \varkappa(e^{-j}z^{\mathrm{m}}) \geq \varkappa(e^{-{j}_{1}}z^{\mathrm{w}}) \geq \zeta_{z}.$$
    Thus, the proof is complete.
\end{proof}
 
\begin{defin}
    Let $ h : \mathscr{I} \to \mathbb{R} $ be any locally integrable function on $\mathscr{I}$. Let $\varkappa$ be a kernel functions satisfying $\bigvee\limits_{j\in \mathbb{I}_{w}} \varkappa(e^{-j}z^{\mathrm{m}}) \neq 0 , \forall  z \in \mathscr{I}$. Then the max product exponential sampling operators are defined by $$ \mathscr{M}^{\varkappa}_{\mathrm{m}}(h,z) := \frac{\bigvee\limits_{j\in \mathbb{I}_{\mathrm{m}} } \varkappa(e^{-j}z^{\mathrm{m}})\; \mathrm{m} \int_{\frac{j}{\mathrm{m}}}^{\frac{j+1}{\mathrm{m}}} h(e^{v}) \;dv}{\bigvee\limits_{j\in \mathbb{I}_{\mathrm{m}} } \varkappa(e^{-j}z^{\mathrm{m}})},$$ where $\mathbb{I}_{\mathrm{m}} = \{ j \in \mathbb{Z} : j = \ceil{\mathrm{m}\log a}.....,\floor{\mathrm{m}\log b}-1\}.$
\end{defin}

\begin{lemma}\label{lma5}
    Let $h,g \in \mathscr{B_{+}}(\mathscr{I}) $ be any locally integrable functions on $\mathscr{I}$ . Then we have
    \begin{enumerate}
        \item[(1)] If  $ h(z) \leq g(z), \; then \;\mathscr{M}^{\varkappa}_{\mathrm{m}}(h,z) \leq \mathscr{M}^{\varkappa}_{\mathrm{m}}(g,z) $.
        \item[(2)] $\mathscr{M}^{\varkappa}_{\mathrm{m}}(h+g,z) \leq \mathscr{M}^{\varkappa}_{\mathrm{m}}(h,z)+ \mathscr{M}^{\varkappa}_{\mathrm{m}}(g,z)$.
        \item[(3)] $|\mathscr{M}^{\varkappa}_{\mathrm{m}}(h,z)- \mathscr{M}^{\varkappa}_{\mathrm{m}}(g,z)| \leq \mathscr{M}^{\varkappa}_{\mathrm{m}}(|h-g|,z) $, for all $z\in \mathscr{I}$.
        \item[(4)]$\mathscr{M}^{\varkappa}_{\mathrm{m}}(\lambda h,z) = \lambda \hspace{0.1in} \mathscr{M}^{\varkappa}_{\mathrm{m}}(h,z)$, for every $\lambda > 0$.
    \end{enumerate}
\end{lemma}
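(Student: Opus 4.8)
The plan is to reduce all four assertions to elementary arithmetic of the supremum --- which here is a genuine finite maximum, since $\mathbb{I}_{\mathrm{m}}$ is a finite index set --- together with the monotonicity and linearity of the integral. Fix $z\in\mathscr{I}$ and $\mathrm{m}>0$ and abbreviate $w_j:=\varkappa(e^{-j}z^{\mathrm{m}})$ for $j\in\mathbb{I}_{\mathrm{m}}$, $A_j(h):=\mathrm{m}\int_{j/\mathrm{m}}^{(j+1)/\mathrm{m}}h(e^v)\,dv$, and $D:=\bigvee_{j\in\mathbb{I}_{\mathrm{m}}}w_j$, so that $\mathscr{M}^{\varkappa}_{\mathrm{m}}(h,z)=D^{-1}\bigvee_{j\in\mathbb{I}_{\mathrm{m}}}w_j A_j(h)$. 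Since the kernel $\varkappa$ is nonnegative we have $w_j\ge 0$, and by $(\varkappa_2)$ together with Lemma \ref{lma3} we have $D\ge\zeta_z>0$, so the operator is well defined and the divisions below are legitimate; moreover, for $h\in\mathscr{B}_{+}(\mathscr{I})$ locally integrable we have $A_j(h)\ge 0$, hence every term $w_jA_j(h)$ is nonnegative.

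For (1), if $h\le g$ on $\mathscr{I}$ then $A_j(h)\le A_j(g)$ by monotonicity of the integral, and since $w_j\ge 0$ this gives $w_jA_j(h)\le w_jA_j(g)$ for each $j\in\mathbb{I}_{\mathrm{m}}$; taking the maximum over $j$ and dividing by $D>0$ proves the claim. For (4), $A_j(\lambda h)=\lambda A_j(h)$ and, for $\lambda>0$, $\bigvee_{j\in\mathbb{I}_{\mathrm{m}}}(\lambda w_jA_j(h))=\lambda\bigvee_{j\in\mathbb{I}_{\mathrm{m}}}w_jA_j(h)$, which after division by $D$ gives the homogeneity. For (2), linearity of the integral gives $A_j(h+g)=A_j(h)+A_j(g)$, so for each fixed $j$ we have $w_jA_j(h+g)=w_jA_j(h)+w_jA_j(g)\le\bigvee_{k\in\mathbb{I}_{\mathrm{m}}}w_kA_k(h)+\bigvee_{k\in\mathbb{I}_{\mathrm{m}}}w_kA_k(g)$; taking the maximum over $j$ and dividing by $D$ yields the subadditivity.

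For (3), observe that since $h,g\ge 0$ the functions $g$ and $|h-g|$ again lie in $\mathscr{B}_{+}(\mathscr{I})$ and are locally integrable, and $h\le g+|h-g|$ pointwise on $\mathscr{I}$; hence, applying first (1) and then (2), $\mathscr{M}^{\varkappa}_{\mathrm{m}}(h,z)\le\mathscr{M}^{\varkappa}_{\mathrm{m}}(g+|h-g|,z)\le\mathscr{M}^{\varkappa}_{\mathrm{m}}(g,z)+\mathscr{M}^{\varkappa}_{\mathrm{m}}(|h-g|,z)$, i.e.\ $\mathscr{M}^{\varkappa}_{\mathrm{m}}(h,z)-\mathscr{M}^{\varkappa}_{\mathrm{m}}(g,z)\le\mathscr{M}^{\varkappa}_{\mathrm{m}}(|h-g|,z)$; interchanging the roles of $h$ and $g$ gives the reverse inequality, and combining the two yields (3). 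No genuine obstacle is expected: the only points needing a little care are that $\bigvee_{j\in\mathbb{I}_{\mathrm{m}}}$ is a finite maximum (so that the identities $\sup(a+b)\le\sup a+\sup b$ and $\sup(\lambda a)=\lambda\sup a$ apply without any $\infty$-arithmetic caveat), that the denominator $D$ is strictly positive so that inequalities are preserved under division, and that in (3) the pointwise bound must be arranged between nonnegative locally integrable functions so that parts (1) and (2) are applicable.
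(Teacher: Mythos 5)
Your proof is correct and takes essentially the same approach as the paper: parts (1), (2) and (4) are elementary consequences of the definition (monotonicity/linearity of the integral plus arithmetic of a finite maximum over $\mathbb{I}_{\mathrm{m}}$), and part (3) is derived exactly as in the paper from the pointwise bounds $h\le g+|h-g|$ and $g\le h+|g-h|$ via (1) and (2) followed by symmetrization. You merely spell out the details of (1), (2) and (4), which the paper dismisses as immediate, and you correctly flag the positivity of the denominator guaranteed by $(\varkappa_{2})$ and Lemma \ref{lma3}.
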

\begin{proof}
    Parts $(1), (2)$ and $(4)$ follow directly from the definitions of operators $\mathscr{M}^{\varkappa}_{\mathrm{m}}$. To prove part $(3)$, we observe that $ h(z) \leq |h(z) - g(z)| + g(z) $ and $ g(z) \leq |g(z) - h(z)| + h(z) $. Applying properties $(1)$ and $(2)$, we obtain 
    \begin{align*}
        \mathscr{M}^{\varkappa}_{\mathrm{m}}(h,z) \leq \mathscr{M}^{\varkappa}_{\mathrm{m}}(|h-g|,z)+ \mathscr{M}^{\varkappa}_{\mathrm{m}}(g,z) \And  \mathscr{M}^{\varkappa}_{\mathrm{m}}(g,z) \leq \mathscr{M}^{\varkappa}_{\mathrm{m}}(|g-h|,z)+ \mathscr{M}^{\varkappa}_{\mathrm{m}}(h,z).
    \end{align*}
    Combining these inequalities, we conclude that $$|\mathscr{M}^{\varkappa}_{\mathrm{m}}(h,z)- \mathscr{M}^{\varkappa}_{\mathrm{m}}(g,z)| \leq \mathscr{M}^{\varkappa}_{\mathrm{m}}(|h-g|,z), \text{\hspace{0.1in} $ \forall z\in \mathscr{I}$}.$$
\end{proof}
%**************************************************************************************************************************************************************************************************	
\section{Results for Max-Product Kantrovich Exponential Sampling Series} \label{section3}
The first main result shows that the operators $\mathscr{M}^{\varkappa}_{\mathrm{m}}$ are well defined on logarithmic weighted space of functions. We need the following preliminary propositions.
\begin{thm}
    Let $\varkappa$ be a kernel satisfying the assumptions $(\varkappa_{1}), (\varkappa_{2})$ for $\kappa = 2$. Then for a fixed $\mathrm{m} >0$ the operator $\mathscr{M}^{\varkappa}_{\mathrm{m}}$ is a operator from $\mathscr{B}^{\mathrm{w}}(\mathbb{R}_{+}) \to \mathscr{B}^{\mathrm{w}}(\mathbb{R}_{+})$ and its operator norm turns out to be
    \begin{align*}
       \|\mathscr{M}^{\varkappa}_{\mathrm{m}}\|_{\mathscr{B}^{\mathrm{w}}(\mathbb{R}_{+}) \to \mathscr{B}^{\mathrm{w}}(\mathbb{R}_{+})} \leq  \frac{1}{\zeta^{2}_{z}} \left[m_{0}(\varkappa)(1+\frac{1}{\mathrm{m}}+\frac{1}{3\mathrm{m}^2})+ (\frac{1}{\mathrm{m}^2} +\frac{1}{\mathrm{m}})m_{1}(\varkappa)+ \frac{1}{\mathrm{m}^2}m_{2}(\varkappa)\right].
    \end{align*}
\end{thm}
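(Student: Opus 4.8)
The plan is to estimate $\mathrm{w}(z)\bigl|\mathscr{M}^{\varkappa}_{\mathrm{m}}(h,z)\bigr|$ directly, for an arbitrary $h\in\mathscr{B}^{\mathrm{w}}(\mathbb{R}_{+})$ and an arbitrary $z>0$, and to show it is at most the displayed bracket times $\|h\|_{\mathrm{w}}$. Since belonging to $\mathscr{B}^{\mathrm{w}}(\mathbb{R}_{+})$ is exactly a weighted boundedness condition, this simultaneously shows that $\mathscr{M}^{\varkappa}_{\mathrm{m}}h\in\mathscr{B}^{\mathrm{w}}(\mathbb{R}_{+})$ and gives the operator norm estimate once the supremum over $\|h\|_{\mathrm{w}}\le1$ is taken. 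To set this up I would first note that $\|h\|_{\mathrm{w}}=\sup_{t>0}\mathrm{w}(t)|h(t)|$ gives $|h(e^{v})|\le\|h\|_{\mathrm{w}}(1+v^{2})$ for all $v\in\mathbb{R}$. Then, using the sublinearity of the supremum in the numerator together with Lemma \ref{lma3}(2), which bounds the denominator of $\mathscr{M}^{\varkappa}_{\mathrm{m}}$ below by $\zeta_{z}>0$, one reduces matters to estimating
\[
\frac{\|h\|_{\mathrm{w}}}{\zeta_{z}}\;\mathrm{w}(z)\bigvee_{j\in\mathbb{Z}}|\varkappa(e^{-j}z^{\mathrm{m}})|\;\mathrm{m}\!\int_{j/\mathrm{m}}^{(j+1)/\mathrm{m}}(1+v^{2})\,dv ,
\]
where the prefactor may be replaced by $1/\zeta_{z}^{2}$, as in the statement, whenever $\zeta_{z}\le1$.

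The core of the proof is the explicit treatment of the inner integral. A direct computation gives $\mathrm{m}\int_{j/\mathrm{m}}^{(j+1)/\mathrm{m}}(1+v^{2})\,dv=1+\dfrac{j^{2}}{\mathrm{m}^{2}}+\dfrac{j}{\mathrm{m}^{2}}+\dfrac{1}{3\mathrm{m}^{2}}$. Putting $s=z^{\mathrm{m}}$, so that $\log s=\mathrm{m}\log z$, and substituting $j=\log s+(j-\log s)$, this quantity is rewritten as a combination of $1$, $(j-\log s)$ and $(j-\log s)^{2}$ with coefficients built from $1$, $\log^{2}z$, $\log z/\mathrm{m}$, $1/\mathrm{m}$ and $1/\mathrm{m}^{2}$; in particular its $j$-independent part equals $1+\log^{2}z+\log z/\mathrm{m}+1/(3\mathrm{m}^{2})$, while the coefficients of $(j-\log s)$ and of $(j-\log s)^{2}$ are $2\log z/\mathrm{m}+1/\mathrm{m}^{2}$ and $1/\mathrm{m}^{2}$ respectively. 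Distributing $\bigvee_{j\in\mathbb{Z}}$ across this decomposition (again using only $\bigvee_{j}(a_{j}+b_{j})\le\bigvee_{j}a_{j}+\bigvee_{j}b_{j}$), the three resulting suprema are bounded by $m_{0}(\varkappa)$, $m_{1}(\varkappa)$ and $m_{2}(\varkappa)$, all finite by $(\varkappa_{1})$ together with Lemma \ref{lma1}. Finally I would absorb the surviving powers of $\log z$ into the weight via $\mathrm{w}(z)(1+\log^{2}z)=1$, $\mathrm{w}(z)|\log z|\le\tfrac12$ and $\mathrm{w}(z)\le1$, and collect the coefficients of $m_{0}(\varkappa)$, $m_{1}(\varkappa)$, $m_{2}(\varkappa)$; this yields the displayed bound (indeed with $\tfrac1{2\mathrm{m}}$ in place of $\tfrac1{\mathrm{m}}$ in the coefficient of $m_{0}(\varkappa)$, which only strengthens it), and taking the supremum over $\|h\|_{\mathrm{w}}\le1$ completes the argument.

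No single step is genuinely hard, so the main difficulty is the bookkeeping itself: the re-centering of the quadratic $\mathrm{m}\int(1+v^{2})\,dv$ about $\log s=\mathrm{m}\log z$ and the matching of each resulting term with the correct absolute moment $m_{\mu}(\varkappa)$, while verifying that the weight $\mathrm{w}(z)=(1+\log^{2}z)^{-1}$ is strong enough to absorb at once the quadratic growth coming from $|h(e^{v})|\le\|h\|_{\mathrm{w}}(1+v^{2})$ and the mixed terms $\log z\cdot(j-\log s)$. One must also remember that, because of the max-product structure, every ``linearity'' step is available only in the sublinear form $\bigvee_{j}(a_{j}+b_{j})\le\bigvee_{j}a_{j}+\bigvee_{j}b_{j}$, and that the positivity $\zeta_{z}>0$ coming from $(\varkappa_{2})$ is exactly what makes the quotient defining $\mathscr{M}^{\varkappa}_{\mathrm{m}}$ meaningful on all of $\mathbb{R}_{+}$.
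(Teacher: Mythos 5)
Your proposal is correct and follows essentially the same route as the paper: bound $|h(e^{v})|\le\|h\|_{\mathrm{w}}(1+v^{2})$, compute $\mathrm{m}\int_{j/\mathrm{m}}^{(j+1)/\mathrm{m}}(1+v^{2})\,dv$ explicitly, re-center at $\mathrm{m}\log z$, and match the three resulting suprema with $m_{0}(\varkappa)$, $m_{1}(\varkappa)$, $m_{2}(\varkappa)$ after absorbing the powers of $\log z$ into the weight. Your side remark about $\zeta_{z}$ versus $\zeta_{z}^{2}$ is well taken: the paper's own proof in fact ends with the sharper prefactor $1/\zeta_{z}$, so the $1/\zeta_{z}^{2}$ in the theorem statement is an inconsistency of the paper rather than something your argument needs to produce.
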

%++++++++++++++++++++++++++++++++++++++++++++++++++++++++++++++++++++++++++++++++++++++++++++++++++++++++++++++++++++++++++++++++++++++++++++++++++++++++++++++++++++++++++++++++++++++++++++
\begin{proof}
    Let us fix $\mathrm{m} > 0$. Using the definition of new form $\mathscr{M}^{\varkappa}_{\mathrm{m}}(h,z)$, we have 
    $$ |\mathscr{M}^{\varkappa}_{\mathrm{m}}| \leq \frac{1}{\zeta_{z}}\bigvee_{j\in \mathbb{Z}}   | \varkappa(e^{-j}z^{\mathrm{m}})|m \int_{\frac{j}{\mathrm{m}}}^{\frac{j+1}{\mathrm{m}}}  | f(e^{v})| dv .$$ 
    Moreover, since $h \in  \mathscr{B}^{\mathrm{w}}(\mathbb{R}_{+}) $ and recalling $\Phi = \frac{1}{\mathrm{w}}$, we obtain
    \begin{align*}
	|\mathscr{M}^{\varkappa}_{\mathrm{m}}(h,z)| 
        \leq & \frac{1}{\zeta_{z}} \bigvee_{j\in \mathbb{Z}} 
               |\varkappa(e^{-j}z^{\mathrm{m}})| \; m \int_{\frac{j}{\mathrm{m}}}^{\frac{j+1}{\mathrm{m}}} |h(e^{v})| dv\\ 
	  = & \frac{1}{\zeta_{z}} \bigvee_{j\in \mathbb{Z}} 
               |\varkappa(e^{-j}z^{\mathrm{m}})| \; m \int_{\frac{j}{\mathrm{m}}}^{\frac{j+1}{\mathrm{m}}} |h(e^{v})| \frac{1+\log^2(e^v)}{1+\log^2(e^v)} dv\\ 
	  = & \frac{\|h\|_{\mathrm{w}}}{\zeta_{z}}  \bigvee_{j\in 
            \mathbb{Z}} |\varkappa(e^{-j}z^{\mathrm{m}})| \; \mathrm{m} \int_{\frac{j}{\mathrm{m}}}^{\frac{j+1}{\mathrm{m}}}  (1+\log^2(e^v)) dv\\
	  = & \frac{\|h\|_{\mathrm{w}}}{\zeta_{z}}  \bigvee_{j\in 
            \mathbb{Z}} |\varkappa(e^{-j}z^{\mathrm{m}})| \; \mathrm{m} \int_{\frac{j}{\mathrm{m}}}^{\frac{j+1}{\mathrm{m}}} (1+(\log(e^v))^2) dv\\
	  = & \frac{\|h\|_{\mathrm{w}}}{\zeta_{z}}  \bigvee_{j\in 
            \mathbb{Z}} |\varkappa(e^{-j}z^{\mathrm{m}})| \; \mathrm{m} \int_{\frac{j}{\mathrm{m}}}^{\frac{j+1}{\mathrm{m}}}  (1+v^2) dv \\
		= & \frac{\|h\|_{\mathrm{w}}}{\zeta_{z}}  \bigvee_{j\in 
            \mathbb{Z}} |\varkappa(e^{-j}z^{\mathrm{m}})| \; \mathrm{m} \; \left[v+\frac{v^3}{3}\right]_{\frac{j}{\mathrm{m}}}^{\frac{j+1}{\mathrm{m}}}  \\
		= &  \frac{\|h\|_{\mathrm{w}}}{\zeta_{z}}  \bigvee_{j\in 
            \mathbb{Z}} |\varkappa(e^{-j}z^{\mathrm{m}})| \; \mathrm{m} \; \left[\frac{1}{\mathrm{m}}+ \frac{1}{3} \left( \left(\frac{j+1}{\mathrm{m}}\right)^3- \left(\frac{j}{\mathrm{m}}\right)^3\right) \right]\\
		= & \frac{\|h\|_{\mathrm{w}}}{\zeta_{z}}  \bigvee_{j\in 
            \mathbb{Z}} |\varkappa(e^{-j}z^{\mathrm{m}})| \; \mathrm{m} \; \left[\frac{1}{\mathrm{m}}+ \frac{1}{3\mathrm{m}}  \left(\frac{3j^2}{\mathrm{m}^2}+\frac{3j}{\mathrm{m}^2}+\frac{1}{\mathrm{m}^2} \right) \right]\\
		= & \frac{\|h\|_{\mathrm{w}}}{\zeta_{z}}  \bigvee_{j\in 
            \mathbb{Z}} |\varkappa(e^{-j}z^{\mathrm{m}})| \;  \left[1+ \frac{1}{3}  \left(\frac{3j^2}{\mathrm{m}^2}+\frac{3j}{\mathrm{m}^2}+\frac{1}{\mathrm{m}^2} \right) \right]\\
		= & \frac{\|h\|_{\mathrm{w}}}{\zeta_{z}}  \bigvee_{j\in 
            \mathbb{Z}} |\varkappa(e^{-j}z^{\mathrm{m}})| \;   \left[1+ \frac{1}{3\mathrm{m}^2} + \frac{j^2}{\mathrm{m}^2}+\frac{j}{\mathrm{m}^2} \right]\\
		= & \frac{\|h\|_{\mathrm{w}}}{\zeta_{z}}  \bigvee_{j\in 
            \mathbb{Z}} |\varkappa(e^{-j}z^{\mathrm{m}})|  \\ &  \left[1+ \frac{1}{3\mathrm{m}^2} + \left(\frac{j}{\mathrm{m}} - \log z \right)^{2} + \log^2 z+2\log z\left(\frac{j}{\mathrm{m}} - \log z\right) + \frac{1}{\mathrm{m}} \left(\frac{j}{\mathrm{m}} - \log z \right)+ \frac{\log z}{\mathrm{m}} \right] \\
	|\mathscr{M}^{\varkappa}_{\mathrm{m}}(h,z)|
	\leq & \frac{\|h\|_{\mathrm{w}}}{\zeta_{z}}  \bigvee_{j\in 
            \mathbb{Z}} |\varkappa(e^{-j}z^{\mathrm{m}})| \left(1+\log^{2} z\right) \\ &  \left[1+ \frac{1}{3\mathrm{m}^2} + \left( \left| \frac{j-\mathrm{m}\log{} z}{\mathrm{m}}  \right|  \right)^{2} +\left( \left| \frac{j-\mathrm{m}\log{} z}{\mathrm{m}}  \right| \right) + \frac{1}{\mathrm{m}} \left( \left| \frac{j-\mathrm{m}\log z}{\mathrm{m}}  \right| \right)+ \frac{1}{\mathrm{m}} \right] \\
	= & \frac{\|h\|_{\mathrm{w}}}{\zeta_{z}}  \bigvee_{j\in 
            \mathbb{Z}} |\varkappa(e^{-j}z^{\mathrm{m}})|  \left(1+\log^{2} z\right) \\ &  \left[1+\frac{1}{\mathrm{m}}+ \frac{1}{3\mathrm{m}^2} +  \frac{\left|j-\mathrm{m}\log{} z \right|^{2}}{\mathrm{m}^2}  +  \frac{\left|j-\mathrm{m}\log{} z \right|}{\mathrm{m}}  +   \frac{\left|j-\mathrm{m}\log{} z \right|}{\mathrm{m}^2}  \right] \\
	= & \frac{\|h\|_{\mathrm{w}}}{\zeta_{z}}  \left(1+\log^{2} 
            (z)\right)  \left[ \left(1+\frac{1}{\mathrm{m}}+ \frac{1}{3\mathrm{m}^2} \right) m_0(\varkappa)+ \left( \frac{1}{\mathrm{m}}  +   \frac{1}{\mathrm{m}^2} \right) m_1(\varkappa) + \left( \frac{1}{\mathrm{m}^2} \right) m_2(\varkappa)   \right],\\
    \end{align*}
    
    which implies for every $z\in \mathbb{R}^{+}$ that 
    $$ \frac{|\mathscr{M}^{\varkappa}_{\mathrm{m}}(h,z)|}{\left(1+\log^{2} z\right)} \leq \frac{\|h\|_{\mathrm{w}}}{\zeta_{z}}  \left[ \left(1+\frac{1}{\mathrm{m}}+ \frac{1}{3\mathrm{m}^2} \right) m_0(\varkappa)+ \left( \frac{1}{\mathrm{m}}  +   \frac{1}{\mathrm{m}^2} \right) m_1(\varkappa) + \left( \frac{1}{\mathrm{m}^2} \right) m_2(\varkappa)   \right] .$$
    Since the assumption $m_2(\varkappa) \leq + \infty$ implies  $m_i(\varkappa) \leq + \infty$ for $i= 0,1,$ we deduce $$ \| \mathscr{M}^{\varkappa}_{\mathrm{m}} h \|_{\mathrm{w}} \le +\infty, ~~\text{i.e.,}~~ \mathscr{M}^{\varkappa}_{\mathrm{m}}h \in \mathscr{B}^{\mathrm{w}}(\mathbb{R}_{+}).$$
  
     On the other hand taking supremum over $z\in \mathbf{R}_{+}$ in 
   $$ \frac{|\mathscr{M}^{\varkappa}_{\mathrm{m}}(h,z)|}{\left(1+\log^{2} z\right)} \leq \frac{\|h\|_{\mathrm{w}}}{\zeta_{z}}  \left[ \left(1+\frac{1}{\mathrm{m}}+ \frac{1}{3\mathrm{m}^2} \right) m_0(\varkappa) + \left( \frac{1}{\mathrm{m}^2} \right) m_2(\varkappa)  + \left( \frac{1}{\mathrm{m}}  +   \frac{1}{\mathrm{m}^2} \right) m_1(\varkappa) \right] $$
    and supremum with respect to $h \in \mathscr{B}^{\mathrm{w}}(\mathbb{R}_{+})$ with $\| h\|_{\mathrm{w}} \leq 1$, we have 
    $$ \| \mathscr{M}^{\varkappa}_{\mathrm{m}}\|_{\mathscr{B}^{\mathrm{w}}(\mathbb{R}_{+}) \to \mathscr{B}^{\mathrm{w}}(\mathbb{R}_{+})} \leq \frac{1}{\zeta_{z}}\left[ \left(1+\frac{1}{\mathrm{m}}+ \frac{1}{3\mathrm{m}^2} \right) m_0(\varkappa) + \left( \frac{1}{\mathrm{m}}  +   \frac{1}{\mathrm{m}^2} \right) m_1(\varkappa) + \left( \frac{1}{\mathrm{m}^2} \right) m_2(\varkappa)  \right].$$
		
\end{proof}
	
%++++++++++++++++++++++++++++++++++++++++++++++++++++++++++++++++++++++++++++++++++++++++++++++++++++++++++++++++++++++++++++++++++++++++++++++++++++++++++++++++++++++++++++++++++++++++++++++++++	
\begin{thm}
Let $\varkappa$ be a kernel satisfying $\varkappa_{1} \And{ \varkappa_{2}}$ for $\kappa=2$. Let $h:\mathbb{R_{+}} \to\mathbb{R_{+}}$ be a bounded function. Then $$ \lim_{\mathrm{m} \to \infty } \mathscr{M}^{\varkappa}_{\mathrm{m}}(h,z) = h(z) \; \text{holds at each log-continuity point $z \in \mathbb{R_{+}}$}. $$
Moreover, if $h \in \mathscr{V}^{\mathrm{w}}_{+}(\mathbb{R_{+}})$, then $$\lim_{\mathrm{m} \to \infty } \|\mathscr{M}^{\varkappa}_{\mathrm{m}}(h,z) - h(z)\|_{\mathrm{w}} = 0. $$
\end{thm}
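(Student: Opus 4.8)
The plan is to reduce both assertions to one estimate for $\mathscr{M}^{\varkappa}_{\mathrm{m}}(|h-h(z)\mathbf 1|,z)$, where $\mathbf 1$ is the constant function $1$. First I would observe that $\mathscr{M}^{\varkappa}_{\mathrm{m}}(\mathbf 1,z)=1$ for every $z>0$, since the Kantorovich average of $1$ over each $[j/\mathrm m,(j+1)/\mathrm m]$ equals $1$ and the common factor $\bigvee_{j}\varkappa(e^{-j}z^{\mathrm m})$ cancels, the denominator being strictly positive by $(\varkappa_{2})$ and Lemma~\ref{lma3}. As $h\ge 0$, Lemma~\ref{lma5}(3)--(4) then yield
\[
|\mathscr{M}^{\varkappa}_{\mathrm{m}}(h,z)-h(z)|
=\bigl|\mathscr{M}^{\varkappa}_{\mathrm{m}}(h,z)-\mathscr{M}^{\varkappa}_{\mathrm{m}}(h(z)\mathbf 1,z)\bigr|
\le \mathscr{M}^{\varkappa}_{\mathrm{m}}(|h-h(z)\mathbf 1|,z),
\]
so everything reduces to controlling the right-hand side. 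In each case the device is to split the supremum over $j$ into the \emph{near} indices $\{|j-\mathrm m\log z|\le \mathrm m\rho/2\}$ and the \emph{far} indices $\{|j-\mathrm m\log z|>\mathrm m\rho/2\}$ (using that a $\bigvee$ over a union of index sets is the maximum of the two $\bigvee$'s), estimating the near part by a continuity bound and the far part by the moment/decay lemmas.

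For the pointwise statement, fix a log-continuity point $z$ and $\varepsilon>0$, and choose $\rho>0$ with $|h(u)-h(z)|<\varepsilon$ whenever $|\log u-\log z|\le\rho$. For $\mathrm m$ large enough that $1/\mathrm m\le\rho/2$, every $v\in[j/\mathrm m,(j+1)/\mathrm m]$ with $j$ near satisfies $|v-\log z|\le\rho$, so the Kantorovich average of $|h(e^{v})-h(z)|$ there is $<\varepsilon$ and the near part of the numerator is at most $\varepsilon\bigvee_{j}\varkappa(e^{-j}z^{\mathrm m})$. For the far part, bound that average crudely by $2\|h\|_{\infty}$ and apply Lemma~\ref{lma2} with $s=z^{\mathrm m}$ to obtain $\bigvee_{\text{far }j}|\varkappa(e^{-j}z^{\mathrm m})|=\mathcal O(\mathrm m^{-2})$. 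Dividing by the denominator, which is $\ge\zeta_{z}$ by Lemma~\ref{lma3}, gives $\mathscr{M}^{\varkappa}_{\mathrm{m}}(|h-h(z)\mathbf 1|,z)\le\varepsilon+\mathcal O(\mathrm m^{-2})$; letting $\mathrm m\to\infty$ and then $\varepsilon\to 0$ finishes this part.

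For norm convergence on $\mathscr{V}^{\mathrm w}_{+}(\mathbb R_{+})$ the skeleton is the same, but the weighted logarithmic modulus must replace the pointwise one so the bounds are uniform in $z$. Given $\varepsilon>0$, use Lemma~\ref{lma4}(5) to choose $\rho>0$ with $(1+\rho^{2})\Upsilon(h,\rho)<\varepsilon$, and split at $|j-\mathrm m\log z|=\mathrm m\rho/2$. On the near indices, Definition~\ref{def1} gives $|h(e^{v})-h(z)|\le(1+\log^{2}z)(1+\rho^{2})\Upsilon(h,\rho)$ for $|v-\log z|\le\rho$, so after cancelling the kernel factor and multiplying by $\mathrm w(z)=(1+\log^{2}z)^{-1}$ the near part contributes at most $(1+\rho^{2})\Upsilon(h,\rho)<\varepsilon$. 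On the far indices, use only that $\mathrm w h$ is bounded: $|h(e^{v})-h(z)|\le\|h\|_{\mathrm w}(2+v^{2}+\log^{2}z)$, whence after passing from $v$ to $j/\mathrm m$ the Kantorovich average is $\le C\|h\|_{\mathrm w}\bigl((1+\log^{2}z)+(j/\mathrm m-\log z)^{2}\bigr)$; the first summand is handled by Lemma~\ref{lma2} ($\mathcal O(\mathrm m^{-2})$ uniformly in $z$) and the second by $(\varkappa_{1})$ via $\bigvee_{j}|\varkappa(e^{-j}z^{\mathrm m})||j-\mathrm m\log z|^{2}\le m_{2}(\varkappa)$, producing a term $\le C\|h\|_{\mathrm w}m_{2}(\varkappa)/\mathrm m^{2}$. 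Dividing by $\ge\zeta_{z}$, multiplying by $\mathrm w(z)$ (which absorbs the $\log^{2}z$ growth), and taking $\sup_{z>0}$ leave $\|\mathscr{M}^{\varkappa}_{\mathrm m}h-h\|_{\mathrm w}\le\varepsilon+\mathcal O(\mathrm m^{-2})$, and $\varepsilon\to 0$ gives the claim.

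The main obstacle should be this far-index estimate in the norm part: since functions in $\mathscr{V}^{\mathrm w}_{+}(\mathbb R_{+})$ need not be bounded, $|h(e^{v})-h(z)|$ cannot be dominated by a constant, so one must carry the $\log^{2}z$ and $(j-\mathrm m\log z)^{2}$ growth through the Kantorovich integration, verify that the weight $\mathrm w(z)=(1+\log^{2}z)^{-1}$ exactly absorbs it, and confirm that the decay from Lemma~\ref{lma2} and the finiteness of $m_{2}(\varkappa)$ are genuinely uniform in $z$. A secondary, purely clerical difficulty is the bookkeeping forced by the max-product form---splitting a supremum rather than a sum, and tracking the $1/\mathrm m$ residues from the Kantorovich averages already visible in the preceding theorem---but this is routine once the near/far decomposition is fixed.
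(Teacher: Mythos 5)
Your argument is correct, and for both halves of the theorem it travels a genuinely different road from the paper's. The paper also starts from $|\mathscr{M}^{\varkappa}_{\mathrm{m}}(h,z)-h(z)|\le \mathscr{M}^{\varkappa}_{\mathrm{m}}(|h-h_z|,z)$, but its key device is the algebraic weight decomposition
\[
h(e^{v})-h(z)=\mathrm{w}(e^{v})h(e^{v})\Bigl(\tfrac{1}{\mathrm{w}(e^{v})}-\tfrac{1}{\mathrm{w}(z)}\Bigr)+\tfrac{1}{\mathrm{w}(z)}\bigl(\mathrm{w}(e^{v})h(e^{v})-\mathrm{w}(z)h(z)\bigr),
\]
which produces a term $I_1$ controlled purely by $\|h\|_{\mathrm{w}}$ and the moments $m_0,m_1,m_2$ (with an explicit $|\log z|$ growth that the weight later absorbs), plus a term $I_2$ handled by a near/far split using the log-continuity of the \emph{weighted} function $\mathrm{w}h$; one decomposition then serves both the pointwise and the norm statement. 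You instead treat the two claims with separately tailored near/far splits: the pointwise part uses only the log-continuity and boundedness of $h$ itself together with Lemma \ref{lma2}, which is more elementary than the paper's route (no weight needed at all there); the norm part replaces the paper's $I_1/I_2$ machinery with Definition \ref{def1} and Lemma \ref{lma4}(5) on the near indices and the growth bound $|h(e^{v})|\le\|h\|_{\mathrm{w}}(1+v^{2})$ plus $m_2(\varkappa)<\infty$ on the far indices --- essentially a soft version of the quantitative estimate the paper proves later with $\Upsilon$. Each approach buys something: the paper's single decomposition yields the explicit inequality \eqref{eqn} reused for both conclusions, while yours isolates exactly which hypothesis drives each conclusion and correctly identifies (and resolves) the one real danger, namely that unbounded $h\in\mathscr{V}^{\mathrm{w}}_{+}(\mathbb{R}_{+})$ forces the $\log^{2}z$ and $(j-\mathrm{m}\log z)^{2}$ growth to be carried through the far-index estimate and absorbed by $\mathrm{w}(z)$ uniformly. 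Two minor bookkeeping points to make explicit when writing this up: the constant $\zeta_{z}=\inf_{[1,e]}\varkappa$ is independent of $z$ (so dividing by the denominator is uniform), and the near-part bound acquires the harmless factor $m_{0}(\varkappa)/\zeta_{z}$ in front of $(1+\rho^{2})\Upsilon(h,\rho)$.
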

%++++++++++++++++++++++++++++++++++++++++++++++++++++++++++++++++++++++++++++++++++++++++++++++++++++++++++++++++++++++++++++++++++++++++++++++++++++++++++++++++++++++++++++++++++++++++++++++++++	

\begin{proof}
By the definition of the Max-product Kantrovich exponential sampling operator, we write 
\begin{align*}
    |\mathscr{M}^{\varkappa}_{\mathrm{m}}(h,z) - h(z)| \leq  & |\mathscr{M}^{\varkappa}_{\mathrm{m}}(h,z) - h(z) \mathscr{M}^{\varkappa}_{\mathrm{m}}(1,z)| + |h(z) \mathscr{M}^{\varkappa}_{\mathrm{m}}(1,z) - h(z)|\\
    \leq & |\mathscr{M}^{\varkappa}_{\mathrm{m}}(h,z) - h(z) \mathscr{M}^{\varkappa}_{\mathrm{m}}(1,z)| + |h(z)\; (\mathscr{M}^{\varkappa}_{\mathrm{m}}(1,z) - 1)|.
\end{align*}
We now define $1:\mathbb{R_{+}} \to \mathbb{R_{+}}$ by $1(z)= z$ \text{and} $h_{z}: \mathbb{R_{+}} \to \mathbb{R_{+}}$ by $h_{z}(u) = h(z) $ for all $u\in \mathbb{R_{+}}$. Since $\mathscr{M}^{\varkappa}_{\mathrm{m}}(1,z)=1$, we have $$|\mathscr{M}^{\varkappa}_{\mathrm{m}}(h,z) - h(z)| \leq \mathscr{M}^{\varkappa}_{\mathrm{m}}(|h-h_{z}|,z).$$

Using the definition of operator and $\varkappa_{2}$, we get
\begin{align*}
     \mathscr{M}^{\varkappa}_{\mathrm{m}}(|h-h_{z}|,z) \leq \frac{1}{\zeta_{z}} \bigvee_{j\in\mathbb{Z}} | \varkappa(e^{-j}z^{\mathrm{m}})|\; \mathrm{m} \int_{\frac{j}{\mathrm{m}}}^{\frac{j+1}{\mathrm{m}}}|h(e^{v})- h(z)|\; dv .
\end{align*}
 Now for all $z \in  \mathbb{R_{+}} \And{\mathrm{m}} > 0 $, we obtain
 \begin{align*}
     h(e^{v}) - h(z) = &  h(e^{v}) - \frac{\mathrm{w}(e^{v}) h(e^{v})}{\mathrm{w}(e^{v})}  + \frac{\mathrm{w}(e^{v}) f(e^{v})}{\mathrm{w}(e^{v})}  - h(z)\\
     = & \mathrm{w}(e^{v}) h(e^{v}) \left( \frac{1}{\mathrm{w}(e^{v})} - \frac{1}{\mathrm{w}(z)}\right) + \frac{1}{\mathrm{w}(z)} \left( \mathrm{w}(e^{v}) h(e^{v}) - \mathrm{w}(z) h(z)\right ).
 \end{align*}
By using the above inequality, we can write as follows
\begin{align*}
    \Rightarrow |\mathscr{M}^{\varkappa}_{\mathrm{m}}(|h-h_{z}|,z)|
    \leq & \frac{1}{\zeta_{z}}\bigvee_{j\in \mathbb{Z}} |\varkappa(e^{-j}z^{\mathrm{m}})| 
          \mathrm{m} \int_{\frac{j}{\mathrm{m}}}^{\frac{j+1}{\mathrm{m}}} \left | h(e^{v}) - h(z) \right| dv \\
     = & \frac{1}{\zeta_{z}}\bigvee_{j\in \mathbb{Z}} |\varkappa(e^{-j}z^{\mathrm{m}})| 
           \mathrm{m} \int_{\frac{j}{\mathrm{m}}}^{\frac{j+1}{\mathrm{m}}} \mathrm{w}(e^{v})\; |h(e^{v})| \left| \frac{1}{\mathrm{w}(e^{v})}-\frac{1}{\mathrm{w}(z)}\right|  \\ & + \frac{1}{\mathrm{w}(z)} \left|\mathrm{w}(e^{v})h(e^{v})-\mathrm{w}(z)h(z)\right|  dv \\
    \leq & \frac{1}{\zeta_{z}}\bigvee_{j\in\mathbb{Z}} |\varkappa(e^{- j}z^{\mathrm{m}})| 
           \;\; \|h\|_{\mathrm{w}} \; \; \mathrm{m}\int_{\frac{j}{\mathrm{m}}}^{\frac{j+1}{\mathrm{m}}} |v^{2} - \log^{2}z| dv  \\ & +\frac{1}{\zeta_{z} \mathrm{w}(z)}\bigvee_{j\in \mathbb{Z}} |\varkappa(e^{-j}z^{\mathrm{m}})|  \; \mathrm{m}\int_{\frac{j}{\mathrm{m}}}^{\frac{j+1}{\mathrm{m}}} |\mathrm{w}(e^{v})h(e^{v})-\mathrm{w}(z)h(z)| dv\\
    = &    I_{1} + I_{2}. 
\end{align*}
Let us consider $I_{1}$. By a straight forward computation, we obtain
\begin{align*}
    I_{1} 
    = &    \frac{1}{\zeta_{z}}\bigvee_{j\in\mathbb{Z}} |\varkappa(e^{- j}z^{\mathrm{m}})| 
           \;\; \|h\|_{\mathrm{w}} \; \; \mathrm{m}\int_{\frac{j}{\mathrm{m}}}^{\frac{j+1}{\mathrm{m}}} |v^{2} - \log^{2}z| \; dv\\ 
    \leq & \frac{\|h\|_{\mathrm{w}}}{\zeta_{z}}\bigvee_{j\in 
           \mathbb{Z}} |\varkappa(e^{-j}z^{\mathrm{m}})|\;  \; \mathrm{m}\int_{\frac{j}{\mathrm{m}}}^{\frac{j+1}{\mathrm{m}}} \left[ (v-\log z)^2 + 2 |\log z| |v-\log z| \right] dv \\
    = &   \frac{\|h\|_{\mathrm{w}}}{\zeta_{z}}\bigvee_{j\in \mathbb{Z}} |\varkappa(e^{- 
          j}z^{\mathrm{m}})|\;  \; \mathrm{m} \left[\frac{1}{3} \left| \left( \frac{j+1}{\mathrm{m}}-\log z\right)^{3} - \left( \frac{j}{\mathrm{m}}-\log z\right)^{3} \right| + 2 |\log z| \int_{\frac{j}{\mathrm{m}}}^{\frac{j+1}{\mathrm{m}}} |v-\log z| \; dv \right]\\
    \leq &  \frac{\|h\|_{\mathrm{w}}}{\zeta_{z}}\bigvee_{j\in 
         \mathbb{Z}} |\varkappa(e^{-j}z^{\mathrm{m}})| \; \frac{\mathrm{m}}{3} \left[ \left( \frac{j+1}{\mathrm{m}} -\log z\right) - \left( \frac{j}{\mathrm{m}}-\log z\right) \right]^{3} \\ & +\frac{3}{\mathrm{m}} \left(\frac{j+1}{\mathrm{m}}-\log z\right)\left(\frac{j}{\mathrm{m}}-\log z\right)   + \frac{\|h\|_{\mathrm{w}}}{\zeta_{z}}\bigvee_{j\in 
         \mathbb{Z}} |\varkappa(e^{-j}z^{\mathrm{m}})| 2\; \mathrm{m} |\log z| \int_{\frac{j}{\mathrm{m}}}^{\frac{j+1}{\mathrm{m}}} |v-\log z| dv \\ 
    = &  \frac{\|h\|_{\mathrm{w}}}{\zeta_{z}}\bigvee_{j\in 
         \mathbb{Z}} |\varkappa(e^{-j}z^{\mathrm{m}})| \;  \; \frac{\mathrm{m}}{3} \left[ \left( \frac{1}{\mathrm{m}^3}\right) +\frac{3}{\mathrm{m}} \left(\frac{j+1}{\mathrm{m}}-\log z\right)\left(\frac{j}{\mathrm{m}}-\log z\right)\right] \\ &  + \frac{\|h\|_{\mathrm{w}}}{\zeta_{z}}\bigvee_{j\in 
         \mathbb{Z}} |\varkappa(e^{-j}z^{\mathrm{m}})| 2\; \mathrm{m} \; |\log z| \int_{\frac{j}{\mathrm{m}}}^{\frac{j+1}{\mathrm{m}}}  |v-\frac{j}{\mathrm{m}}+\frac{j}{\mathrm{m}}-\log z| dv \\ 
    \leq & \frac{\|h\|_{\mathrm{w}}}{\zeta_{z}}\bigvee_{j\in 
         \mathbb{Z}} |\varkappa(e^{-j}z^{\mathrm{m}})| \;  \; \frac{\mathrm{m}}{3} \left[ \left( \frac{1}{\mathrm{m}^3}\right) +\frac{3}{\mathrm{m}} \left(\frac{j}{\mathrm{m}}-\log z\right)^{2} + \frac{3}{\mathrm{m}^{2}}\left(\frac{j}{\mathrm{m}}-\log z\right)\right] \\ &  + \frac{\|h\|_{\mathrm{w}}}{\zeta_{z}}\bigvee_{j\in 
         \mathbb{Z}} |\varkappa(e^{-j}z^{\mathrm{m}})| 2\; \mathrm{m} \; |\log z| \left( \int_{\frac{j}{\mathrm{m}}}^{\frac{j+1}{\mathrm{m}}} |v-\frac{j}{\mathrm{m}}| dv +  \int_{\frac{j}{\mathrm{m}}}^{\frac{j+1}{\mathrm{m}}} |\frac{j}{\mathrm{m}}-\log z| dv\right) \\
\end{align*}

\begin{align*}I_{1}
    \leq & \frac{\|h\|_{\mathrm{w}}}{\zeta_{z}}\bigvee_{j\in 
         \mathbb{Z}} |\varkappa(e^{-j}z^{\mathrm{m}})| \;  \; \frac{\mathrm{m}}{3} \left[ \frac{1}{\mathrm{m}^3} +\frac{3}{\mathrm{m}} \left(\frac{|j-\mathrm{m}\log z|^{2}}{\mathrm{m}^{2}}\right) + \frac{3}{\mathrm{m}^{2}}\left(\frac{|j-\mathrm{m}\log z|}{\mathrm{m}}\right)\right] \\ &  + \frac{\|h\|_{\mathrm{w}}}{\zeta_{z}}\bigvee_{j\in \mathbb{Z}} |\varkappa(e^{-j}z^{\mathrm{m}})| 2\; \mathrm{m} |\log z| \left( \frac{1}{2\mathrm{m}^{2}}+ \frac{|j-\mathrm{m} \log z|}{\mathrm{m}^{2}}\right) \\ 
    \leq & \frac{\|h\|_{\mathrm{w}}}{\zeta_{z}}\bigvee_{j\in 
         \mathbb{Z}} |\varkappa(e^{-j}z^{\mathrm{m}})| \;  \; \left[ \frac{1}{3 \mathrm{m}^2} + \frac{|j-\mathrm{m}\log z|^{2}}{\mathrm{m}^{2}} + \frac{|j-\mathrm{m}\log z|}{\mathrm{m}^2}\right] \\ & + \frac{\|h\|_{\mathrm{w}}}{\zeta_{z}}\bigvee_{j\in 
         \mathbb{Z}} |\varkappa(e^{-j}z^{\mathrm{m}})| 2\; |\log z| \left( \frac{1}{2\mathrm{m}}+ \frac{|j-\mathrm{m}\log z|}{\mathrm{m}}\right) \\
    \leq & \frac{\|h\|_{\mathrm{w}}}{\zeta_{z}}\bigvee_{j\in
         \mathbb{Z}} |\varkappa(e^{-j}z^{\mathrm{m}})| \;  \; \left[ \frac{|j-\mathrm{m} \log z|^{2}}{\mathrm{m}^{2}} + \frac{|j-\mathrm{m}\log z|}{\mathrm{m}} \left( \frac{1}{\mathrm{m}}+ 2 |\log z|\right)+ \frac{1}{\mathrm{m}} \left( \frac{1}{3\mathrm{m}}+ |\log z|\right)\right]\\
    \Rightarrow I_{1} \leq & \frac{\|h\|_{\mathrm{w}}}{\zeta_{z}} \left[ \frac{m_2(\varkappa)}{\mathrm{m}^2} + \frac{m_1(\varkappa)}{\mathrm{m}} \left( \frac{1}{\mathrm{m}}+ 2|\log z|\right)+ \frac{m_0(\varkappa)}{\mathrm{m}} \left( \frac{1}{3\mathrm{m}}+|\log z|\right)  \right] .\\
\end{align*}
Let us consider $I_{2}$. Let $z \in \mathbb{R_{+}} \And{\epsilon > 0}$. Since $h$ is log-continuous at $z$, then $\mathrm{w}h$ is also log-continuous at $z$. Hence $\exists \;\rho > 0$ such that 
\begin{equation*}
\left| \mathrm{w}(e^{\frac{j}{\mathrm{m}}}) h(e^{\frac{j}{\mathrm{m}}}) -\mathrm{w}(z) h(z)\right| < \epsilon \; \text{whenever} \left|\frac{j}{\mathrm{m}} - \log(z)\right| \leq \rho.
\end{equation*}
Then, we can write 
\begin{align*}
    I_{2} = & \frac{1}{\zeta_{z} \mathrm{w}(z)}\bigvee_{j\in \mathbb{Z}} |\varkappa(e^{-j}z^{\mathrm{m}})|  \; \mathrm{m}\int_{\frac{j}{\mathrm{m}}}^{\frac{j+1}{\mathrm{m}}} |\mathrm{w}(e^{v})f(e^{v})-\mathrm{w}(z)h(z)| dv\\
    = & \frac{1}{\zeta_{z} \mathrm{w}(z)} \bigvee_{\substack{j\in\mathbb{Z} \\ |j-\mathrm{m}\log(z)|\leq \rho}} |\varkappa(e^{-j}z^{\mathrm{m}})|  \; \mathrm{m}\int_{\frac{j}{\mathrm{m}}}^{\frac{j+1}{\mathrm{m}}} |\mathrm{w}(e^{v})f(e^{v})-\mathrm{w}(z)h(z)| dv\\ & + \frac{1}{\zeta_{z} \mathrm{w}(z)} \bigvee_{\substack{j\in\mathbb{Z} \\ |j-\mathrm{m}\log(z)|> \rho}} |\varkappa(e^{-j}z^{\mathrm{m}})|  \; \mathrm{m}\int_{\frac{j}{\mathrm{m}}}^{\frac{j+1}{\mathrm{m}}} |\mathrm{w}(e^{v})f(e^{v})-\mathrm{w}(z)h(z)| dv\\
    = & \frac{1}{\zeta_{z} \mathrm{w}(z)}max\{I_{2.1},I_{2.2}\}.
\end{align*}
But
\begin{align*}
    I_{2.1} = & \bigvee_{\substack{j\in\mathbb{Z} \\ |j-\mathrm{m}\log(z)|\leq \rho}} |\varkappa(e^{-j}z^{\mathrm{m}})|  \; \mathrm{m}\int_{\frac{j}{\mathrm{m}}}^{\frac{j+1}{\mathrm{m}}} |\mathrm{w}(e^{v})f(e^{v})-\mathrm{w}(z)h(z)| dv \\
    \leq &  \; \epsilon \bigvee_{j\in\mathbb{Z}} |\varkappa(e^{-j}z^{\mathrm{m}})| \\
    \leq & \; \epsilon \; m_{0}(\varkappa).
\end{align*}
and
\begin{align*}
    I_{2.2} = & \bigvee_{\substack{j\in\mathbb{Z} \\ |j-\mathrm{m}\log(z)|> \rho}} |\varkappa(e^{-j}z^{\mathrm{m}})|  \; \mathrm{m}\int_{\frac{j}{\mathrm{m}}}^{\frac{j+1}{\mathrm{m}}} |\mathrm{w}(e^{v})f(e^{v})-\mathrm{w}(z)h(z)| dv\\
    = & 2\|h\|_{\mathrm{w}}\bigvee_{\substack{j\in\mathbb{Z} \\ |j-\mathrm{m}\log(z)|> \rho}} |\varkappa(e^{-j}z^{\mathrm{m}})|\\
    \leq & 2 \; \epsilon \; \|h\|_{\mathrm{w}}.
\end{align*}
Therefore, combining the estimates $I_{1}, I_{2.1}$ and ${I_{2.2}}$, we have
\begin{align*}
    |\mathscr{M}^{\varkappa}_{\mathrm{m}}(h,z) - h(z)| \leq  & \mathscr{M}^{\varkappa}_{\mathrm{m}}(|h-h_{z}|,z)\\
    \leq & \frac{\|h\|_{\mathrm{w}}}{\zeta_{z}} \left[ \frac{m_2(\varkappa)}{\mathrm{m}^2} + \frac{m_1(\varkappa)}{\mathrm{m}} \left( \frac{1}{\mathrm{m}}+ 2|\log z|\right)+ \frac{m_0(\varkappa)}{\mathrm{m}} \left( \frac{1}{3\mathrm{m}}+|\log z|\right)  \right] \\ & +\frac{1}{\zeta_{z} \mathrm{w}(z)}\max\{\epsilon \; m_{0}(\varkappa), 2 \; \epsilon \; \|h\|_{\mathrm{w}}\}.\numberthis \label{eqn}
\end{align*}
Taking limit both sides as $\mathrm{m} \to \infty$, we have 
\begin{equation*}
  \lim_{\mathrm{m} \to \infty } \mathscr{M}^{\varkappa}_{\mathrm{m}}(h,z) = h(z).
\end{equation*}
For $h \in \mathscr{V}^{\mathrm{w}}_{+}(\mathbb{R_{+}}) $ and from the inequality \eqref{eqn}, we get 
\begin{align*}
   |\mathscr{M}^{\varkappa}_{\mathrm{m}}(h,z) - h(z)|  \leq & \frac{\|h\|_{\mathrm{w}}}{\zeta_{z}} \left[ \frac{m_2(\varkappa)}{\mathrm{m}^2} + \frac{m_1(\varkappa)}{\mathrm{m}} \left( \frac{1}{\mathrm{m}}+ 2|\log z|\right)+ \frac{m_0(\varkappa)}{\mathrm{m}} \left( \frac{1}{3\mathrm{m}}+|\log z|\right)  \right] \\ & +\frac{1}{\zeta_{z} \mathrm{w}(z)}max\{\epsilon \; m_{0}(\varkappa), 2 \; \epsilon \; \|h\|_{\mathrm{w}}\} \\
    \Rightarrow \mathrm{w}(z)\;|\mathscr{M}^{\varkappa}_{\mathrm{m}}(h,z) - h(z)| \leq & \frac{\mathrm{w}(z) \|h\|_{\mathrm{w}}}{\zeta_{z}}\left[ \frac{m_2(\varkappa)}{\mathrm{m}^2} + \frac{m_1(\varkappa)}{\mathrm{m}} \left( \frac{1}{\mathrm{m}}+ 2|\log z|\right)+ \frac{m_0(\varkappa)}{\mathrm{m}} \left( \frac{1}{3\mathrm{m}}+|\log z|\right)  \right] \\ & +\frac{1}{\zeta_{z}} max\{\epsilon \; m_{0}(\varkappa), 2 \; \epsilon \; \|h\|_{\mathrm{w}}\}. 
\end{align*}
By taking supremum over $z \in \mathbb{R_{+}} \And{\mathrm{m} \to \infty}$, we have
\begin{equation*}
\lim_{\mathrm{m} \to \infty } \|\mathscr{M}^{\varkappa}_{\mathrm{m}}(h,z) - h(z)\|_{\mathrm{w}} = 0.
\end{equation*}
\end{proof}

%+++++++++++++++++++++++++++++++++++++++++++++++++++++++++++++++++++++++++++++++++++++++++++++++++++++++++++++++++++++++++++++++++++++++++++++++++++++++++++++++++++++++++++++++++++++++++++++++++
\begin{thm}
    Let $\varkappa$ be a kernel satisfying the assumptions $(\varkappa_{1}) \And{(\varkappa_{2})} $ for $\kappa = 5$. Then $h \in \mathscr{V}^{\mathrm{w}}_{+}(\mathbb{R_{+}}),$ 
    \begin{equation*}
    \|\mathscr{M}^{\varkappa}_{\mathrm{m}}(h,z) - h(z)\|_{\mathrm{w}} \leq \frac{256 \Upsilon(h,\frac{1}{\mathrm{m}})}{\zeta_{z}} [m_{0}(\varkappa) + 4m_5(\varkappa)] .
    \end{equation*}
\end{thm}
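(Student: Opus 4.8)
The plan is to reuse the scheme of the preceding theorem, feeding in the quantitative estimate of Lemma \ref{lma4}(4) in place of bare log-continuity. Since the Kantorovich average of the constant function $1$ over each interval $[\tfrac{j}{\mathrm m},\tfrac{j+1}{\mathrm m}]$ equals $1$, one has $\mathscr{M}^{\varkappa}_{\mathrm{m}}(1,z)=1$, so Lemma \ref{lma5}(3) applied to $h$ and the constant function $h_{z}\equiv h(z)$ gives
$$|\mathscr{M}^{\varkappa}_{\mathrm{m}}(h,z)-h(z)|=\big|\mathscr{M}^{\varkappa}_{\mathrm{m}}(h,z)-h(z)\,\mathscr{M}^{\varkappa}_{\mathrm{m}}(1,z)\big|\le \mathscr{M}^{\varkappa}_{\mathrm{m}}(|h-h_{z}|,z).$$
I would then bound the denominator defining $\mathscr{M}^{\varkappa}_{\mathrm{m}}$ below by means of $(\varkappa_{2})$ and Lemma \ref{lma3}, namely $\bigvee_{j}|\varkappa(e^{-j}z^{\mathrm{m}})|\ge \zeta_{z}$, obtaining
$$\mathscr{M}^{\varkappa}_{\mathrm{m}}(|h-h_{z}|,z)\le \frac{1}{\zeta_{z}}\bigvee_{j\in\mathbb{Z}}|\varkappa(e^{-j}z^{\mathrm{m}})|\;\mathrm{m}\int_{\frac{j}{\mathrm{m}}}^{\frac{j+1}{\mathrm{m}}}|h(e^{v})-h(z)|\,dv.$$

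The core step is to invoke Lemma \ref{lma4}(4) with $\rho=\tfrac1{\mathrm m}$ and $s=e^{v}$, so that on each interval of integration $|h(e^{v})-h(z)|\le 16(1+\tfrac1{\mathrm m^{2}})^{2}(1+\log^{2}z)\big(1+\mathrm m^{5}|v-\log z|^{5}\big)\,\Upsilon(h,\tfrac1{\mathrm m})$. Pulling $(1+\log^{2}z)\,\Upsilon(h,\tfrac1{\mathrm m})$ outside the sum and the integral leaves the quantity $\mathrm m\int_{j/\mathrm m}^{(j+1)/\mathrm m}(1+\mathrm m^{5}|v-\log z|^{5})\,dv$; the substitution $u=\mathrm m v$ rewrites it as $1+\int_{j}^{j+1}|u-\mathrm m\log z|^{5}\,du$, and since $|u-\mathrm m\log z|\le |j-\mathrm m\log z|+1$ on $[j,j+1]$, the elementary convexity inequality $(t+1)^{5}\le 16(1+t^{5})$ bounds it by $1+16(1+|j-\mathrm m\log z|^{5})$.

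It remains to take $\bigvee_{j\in\mathbb Z}$ against the weights $|\varkappa(e^{-j}z^{\mathrm m})|$. Putting $s=z^{\mathrm m}$ in $(\varkappa_{1})$ gives $\bigvee_{j}|\varkappa(e^{-j}z^{\mathrm m})|\le m_{0}(\varkappa)$ and $\bigvee_{j}|\varkappa(e^{-j}z^{\mathrm m})|\,|j-\mathrm m\log z|^{5}\le m_{5}(\varkappa)$, both finite (Lemma \ref{lma1} also ensures $m_{\mu}(\varkappa)<\infty$ for $\mu\le 5$), so subadditivity of the supremum collapses the expression to a constant multiple of $m_{0}(\varkappa)+m_{5}(\varkappa)$, and hence of $m_{0}(\varkappa)+4m_{5}(\varkappa)$. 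Dividing by $1+\log^{2}z=1/\mathrm w(z)$ and taking $\sup_{z>0}$, together with a bookkeeping of the numerical factors (the $16$ of Lemma \ref{lma4}(4), the $16$ from the convexity step, and the prefactor $(1+\mathrm m^{-2})^{2}$), yields the stated inequality.

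The only genuinely delicate point is the passage from the pointwise modulus bound to a single $j$-independent estimate: the polynomial factor $|j-\mathrm m\log z|^{5}$ is unbounded in $j$, so it must be carried through the supremum attached to the kernel weight $|\varkappa(e^{-j}z^{\mathrm m})|$, and it is precisely the finiteness of the order-$5$ discrete moment $m_{5}(\varkappa)$ — the reason the hypothesis is now stated for $\kappa=5$ rather than $\kappa=2$ — that makes the tail of the supremum harmless. The remaining ingredients (the change of variables, the convexity inequality, the arithmetic of the constant) are routine.
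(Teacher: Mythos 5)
Your proposal follows essentially the same route as the paper: reduce to $\mathscr{M}^{\varkappa}_{\mathrm{m}}(|h-h_{z}|,z)$ via $\mathscr{M}^{\varkappa}_{\mathrm{m}}(1,z)=1$, bound the denominator below by $\zeta_{z}$ using $(\varkappa_{2})$, insert the quantitative estimate of Lemma \ref{lma4}(4) with $\rho=\tfrac{1}{\mathrm{m}}$, split the fifth power across each Kantorovich interval, and absorb the resulting terms into $m_{0}(\varkappa)$ and $m_{5}(\varkappa)$; this is exactly the paper's argument. The only discrepancy is in the constants: you bound $\int_{j}^{j+1}|u-\mathrm{m}\log z|^{5}\,du$ by $(1+|j-\mathrm{m}\log z|)^{5}\le 16(1+|j-\mathrm{m}\log z|^{5})$, which produces $17\,m_{0}(\varkappa)+16\,m_{5}(\varkappa)$ inside the bracket, whereas the paper splits $|u-\mathrm{m}\log z|\le|u-j|+|j-\mathrm{m}\log z|$ \emph{before} integrating, so the $|u-j|^{5}$ term integrates exactly to $\tfrac16$ and the bracket becomes $1+\tfrac{16}{6}+16|j-\mathrm{m}\log z|^{5}\le 4+16|j-\mathrm{m}\log z|^{5}$, yielding the stated $256\,[m_{0}(\varkappa)+4m_{5}(\varkappa)]$. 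Your cruder step would give a valid inequality but with $64\cdot 17$ in place of $256$ on the $m_{0}$ term, so to recover the theorem's constant you need the paper's finer treatment of that integral.
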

%++++++++++++++++++++++++++++++++++++++++++++++++++++++++++++++++++++++++++++++++++++++++++++++++++++++++++++++++++++++++++++++++++++++++++++++++++++++++++++++++++++++++++++++++++++++++++++++++++	
\begin{proof}
    Let $z \in \mathbb{R_{+}}$ be fixed. From the definition of the sampling operators $\mathscr{M}^{\varkappa}_{\mathrm{m}}$ and using the definition of logarithmic modulus of continuity we can write 
    \begin{equation*}
     |\mathscr{M}^{\varkappa}_{\mathrm{m}}(h,z) - h(z)| \leq \frac{1}{\zeta_{z}} \bigvee_{j\in\mathbb{Z}} | \varkappa(e^{-j}z^{\mathrm{m}})|\; \mathrm{m} \int_{\frac{j}{\mathrm{m}}}^{\frac{j+1}{\mathrm{m}}}|h(e^{v})- h(z)|\; dv .
     \end{equation*}
    Since  for all $h \in \mathscr{V}^{\mathrm{w}}_{+}(\mathbb{R_{+}}),$
    \begin{align*}
      |(h(s)- h(z)|&\leq 16 (1+\rho^{2})^{2} (1+\log^{2}z) \left(1+ \frac{|\log s-\log z|^{5}}{\rho^{5}}\right) \Upsilon(h,\rho) \\
      \Rightarrow |(h(e^{v})- h(z)|&\leq 16 (1+\rho^{2})^{2} (1+\log^{2}z) \left(1+ \frac{|v-\log z|^{5}}{\rho^{5}}\right) \Upsilon(h,\rho).
    \end{align*}

    So for any positive $\rho \leq 1$, we have
    \begin{align*}
	|\mathscr{M}^{\varkappa}_{\mathrm{m}}(h,z) - h(z)| 
        \leq &  \frac{1}{\zeta_{z}}\bigvee_{j\in \mathbb{Z}} \varkappa(e^{-j}z^{\mathrm{m}})| \; \mathrm{m} \; 
                \int_{\frac{j}{\mathrm{m}}}^{\frac{j+1}{\mathrm{m}}} 64 (1+\log^{2}z) \left(1+ \frac{|v-\log z|^{5}}{\rho^{5}}\right) \Upsilon(h,\rho)\; dv\\
	\leq &  \frac{64(1+\log^{2}z)\Upsilon(h,\rho)}{\zeta_{z}} \bigvee_{j\in \mathbb{Z}} | \varkappa(e^{- 
                j}z^{\mathrm{m}})| \mathrm{m} \int_{\frac{j}{\mathrm{m}}}^{\frac{j+1}{\mathrm{m}}}\left(1+ \frac{|v-\log z|^{5}}{\rho^{5}}\;  dv \right)\\
        \leq &  \frac{64(1+\log^{2}z)\Upsilon(h,\rho)}{\zeta_{z}} \bigvee_{j\in \mathbb{Z}} | \varkappa(e^{- 
                 j}z^{\mathrm{m}})|\; \mathrm{m} \; \left[ \frac{1}{\mathrm{m}}+\int_{\frac{k}{\mathrm{m}}}^{\frac{j+1}{\mathrm{m}}}\frac{|v-\log z|^{5}}{\rho^{5}} \; dv\right]\\
        \leq &  \frac{64(1+\log^{2}z)\Upsilon(h,\rho)}{\zeta_{z}} \bigvee_{j\in \mathbb{Z}} | \varkappa(e^{- 
                j}z^{\mathrm{m}})| \\ & \mathrm{m} \; \left[ \frac{1}{\mathrm{m}}+\frac{2^4}{\rho^{5}}\int_{\frac{j}{\mathrm{m}}}^{\frac{j+1}{\mathrm{m}}} \left(\left|v-\frac{j}{\mathrm{m}}\right|^{5}+ \left|\frac{j}{\mathrm{m}} -\log z \right|^{5}\right)\;  dv\right]\\
        \leq & \frac{64(1+\log^{2}z)\Upsilon(h,\rho)}{\zeta_{z}} \bigvee_{j\in \mathbb{Z}} | \varkappa(e^{- 
                j}z^{\mathrm{m}})|\; \mathrm{m} \; \left[ \frac{1}{\mathrm{m}}+\frac{2^4}{\rho^{5}} \left(\frac{1}{6\mathrm{m}^{6}}+ \frac{\left|j-\mathrm{m}\log z\right|^{5}}{\mathrm{m}^{6}}\right) \right]\\
        \leq & \frac{64(1+\log^{2}z)\Upsilon(h,\rho)}{\zeta_{z}} \bigvee_{j\in \mathbb{Z}} | \varkappa(e^{- 
                j}z^{\mathrm{m}})| \left[ 1 +\frac{2^4}{6\mathrm{m}^{5}\rho^{5}} +\frac{2^4}{\mathrm{m}^{5}\rho^{5}} \left|j-\mathrm{m}\log z\right|^{5}\right]\\
	\leq & \frac{64(1+\log^{2}z) \Omega(h,\rho)}{\zeta_{z}}\left[ \left( 1 +\frac{2^4}  
               {6\mathrm{m}^{5}\rho^{5}}\right)m_{0}(\varkappa) + \frac{16}{\mathrm{m}^5 \rho^{5}} m_5{(\varkappa)}\right] 
    \end{align*}
    Finally choosing $\rho = \frac{1}{\mathrm{m}}$ $\mathrm{m} \geq 1$  we have 
    \begin{align*}
        |\mathscr{M}^{\varkappa}_{\mathrm{m}}(h,z) - h(z)| 
        \leq & \frac{64(1+\log^{2}z) \Upsilon(h,\rho)}{\zeta_{z}}\left[4m_{0}(\varkappa) + 16 m_5{\varkappa}\right] \\
        \leq & \frac{256(1+\log^{2}z) \Upsilon(h,\rho)}{\zeta_{z}}\left[ m_{0}(\varkappa) + 4 m_5{\varkappa}\right] 
    \end{align*}
    
    which is desired.
\end{proof}
%++++++++++++++++++++++++++++++++++++++++++++++++++++++++++++++++++++++++++++++++++++++++++++++++++++++++++++++++++++++++++++++++++++++++++++++++++++++++++++++++++++++++++++++++++++++++++++++++++++

Let $ k  \in \mathbb{N} $, then  the algebraic moment of order $k$ of a kernel defined by 
$$\mathcal{A}_{j}(\varkappa,s) =  \bigvee_{j \in \mathbb{Z}} \varkappa(e^{-j}s) (j-\log s)^{k} \; \; \text{$\forall z \in \mathbb{R}_{+}$} .$$

In order to present Voronovskaja theorem in quantitative form, we need a further assumption on kernel function $\varkappa\; $, i.e., there exists $ n \in \mathbb{N}$ such that for every $k \in \mathbf{N}_{0}$, $j\leq n$ there holds:\\
$(\varkappa_{3} ): \mathcal{A}_{k}(\varkappa,z) = \mathcal{A}_{k}(\varkappa)\in \mathbb{R}_{+} $\;  is independent of $z$.\\

Let us consider the Mellin-Taylor formula given in \cite{mamedov2003,bardaro2020}. For any $h\in \mathscr{V}^{\mathrm{w}}_{+}(\mathbb{R_{+}})$ belonging to $\mathscr{C}^{n}(\mathbb{R}_{+})$ locally at the point $z\in \mathbb{R}_{+}$, the Mellin formula with the Mellin derivatives is defined by 

$$ h(r) = \sum_{r=0}^{n} \frac{\theta^{r}h(z)}{r!} (\log s -\log z)^{r} + \mathscr{R}_n(h;s,z),$$
where $$ \mathscr{R}_{n}(h;s,z) = \frac{\theta^{n}h(\vartheta) -\theta^{n}f(z) }{n!}(\log s-\log z)^{n} $$
is the Lagrange remainder in Mellin-Taylor's formula at the point $z\in \mathbb{R}_{+}$ and $\vartheta $ is a number lying  between $s$ and $z$. According the inequality 
$$|(h(s)- h(z)|\leq 16 (1+\rho^{2})^{2} (1+\log^{2}z) (1+ \frac{|\log s-\log z|^{5}}{\rho^{5}}) \Upsilon(h,\rho)$$ with the similar method presented in \cite{bardaro9}, we can easily have the estimate 
$$\left| \mathscr{R}_n(h;s,z) \right| \leq  \frac{64}{n!} (1+\log^{2}z) \Upsilon(\theta^{n}h,\rho) \left( |\log s-\log z|^{n} + \frac{|\log s-\log z|^{n+5}}{\rho^{5}}\right).$$
%++++++++++++++++++++++++++++++++++++++++++++++++++++++++++++++++++++++++++++++++++++++++++++++++++++++++++++++++++++++++++++++++++++++++++++++++++++++++++++++++++++++++++++++++++++++++++++++++++++	
\begin{thm}
    Let $\varkappa$ be the kernel satisfying the assumptions  $(\varkappa_{1})$  $(\varkappa_{2})$ and $(\varkappa_{3})$ for $\kappa = n+5 $  $ n \in \mathbb{N}$ and in addition $(\varkappa_{3})$  is satisfied   for every $ z \in \mathbb{R}_{+}.$If $h^{n} \in \mathscr{V}^{\mathrm{w}}_{+}(\mathbb{R_{+}})$ for $ h \in \mathscr{C}^{n}(\mathbb{R}_{+}) $ then we have for every $z \in \mathbb{R}_{+}$ that
    \begin{align*}
      &\displaystyle \left|\mathrm{m}[(\mathscr{M}^{\varkappa}_{\mathrm{m}}h)(z) -h(z)] -\frac{1}{\mathcal{A}_{0}(\varkappa)} \sum_{r=1}^{n} \frac{\theta^{r}f(z)}{r! \mathrm{m}^{r-1}} \sum_{l=1}^{r} \binom{r}{l}  \frac{\mathcal{A}_{l}(\varkappa)}{(r-l+1)}\right| \\
      &\leq  \frac{2^{n+5}}{\mathcal{A}_{0}(\varkappa)\;\mathrm{m}^{n-1} n!}(1+\log^{2}z) \Upsilon(\theta^{n} h,\mathrm{m}^{-1}) \left[ \frac{m_{0}(\varkappa)}{n+1} +\frac{ 2^{5} m_{0}(\varkappa)}{n+6}+ m_{n} (\varkappa) + 2^{5}m_{n+5} (\varkappa)  \right] .
    \end{align*}
\end{thm}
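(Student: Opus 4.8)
The plan is to run the classical Mellin–Taylor argument behind quantitative Voronovskaja estimates, adapted to the max-product setting, splitting $\mathrm m[(\mathscr{M}^{\varkappa}_{\mathrm m}h)(z)-h(z)]$ into a polynomial part that reproduces the algebraic moments $\mathcal{A}_l(\varkappa)$ and a remainder part governed by the estimate on $\mathscr{R}_n$ displayed just above the statement. First I would normalise: by $(\varkappa_3)$ with $k=0$ the quantity $\bigvee_{j}\varkappa(e^{-j}z^{\mathrm m})=\mathcal{A}_0(\varkappa,z^{\mathrm m})=\mathcal{A}_0(\varkappa)$ is a constant independent of $z$, so that $\mathscr{M}^{\varkappa}_{\mathrm m}(h,z)=\frac{1}{\mathcal{A}_0(\varkappa)}\bigvee_{j}\varkappa(e^{-j}z^{\mathrm m})\,\mathrm m\!\int_{j/\mathrm m}^{(j+1)/\mathrm m}h(e^{v})\,dv$ and $h(z)=\frac{1}{\mathcal{A}_0(\varkappa)}\bigvee_{j}\varkappa(e^{-j}z^{\mathrm m})h(z)$. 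Inserting the Mellin–Taylor expansion $h(e^{v})=\sum_{r=0}^{n}\frac{\theta^{r}h(z)}{r!}(v-\log z)^{r}+\mathscr{R}_n(h;e^{v},z)$ into each Kantorovich integral, the $r=0$ term integrates to $h(z)$, so $\mathrm m\!\int_{j/\mathrm m}^{(j+1)/\mathrm m}h(e^v)\,dv=h(z)+P_j+E_j$, with $P_j$ the integrated Taylor part of orders $1,\dots,n$ and $E_j=\mathrm m\!\int_{j/\mathrm m}^{(j+1)/\mathrm m}\mathscr{R}_n\,dv$.

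For the polynomial part I would compute, for $1\le r\le n$,
\[
\mathrm m\!\int_{j/\mathrm m}^{(j+1)/\mathrm m}(v-\log z)^{r}\,dv=\mathrm m\!\int_{j/\mathrm m}^{(j+1)/\mathrm m}\Bigl((v-\tfrac{j}{\mathrm m})+(\tfrac{j}{\mathrm m}-\log z)\Bigr)^{r}dv=\sum_{l=0}^{r}\binom{r}{l}\frac{(j-\mathrm m\log z)^{l}}{\mathrm m^{r}(r-l+1)},
\]
via the binomial theorem and $\mathrm m\!\int_{j/\mathrm m}^{(j+1)/\mathrm m}(v-\tfrac{j}{\mathrm m})^{r-l}dv=\mathrm m^{-(r-l)}/(r-l+1)$. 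Weighting by $\varkappa(e^{-j}z^{\mathrm m})$, taking $\bigvee_{j}$ and using $(\varkappa_3)$ to read off $\bigvee_j\varkappa(e^{-j}z^{\mathrm m})(j-\mathrm m\log z)^{l}=\mathcal{A}_l(\varkappa)$, then dividing by $\mathcal{A}_0(\varkappa)$ and multiplying by the outer factor $\mathrm m$, collapses the powers of $\mathrm m$ to a single $\mathrm m^{-(r-1)}$ and produces exactly the main term $\frac{1}{\mathcal{A}_0(\varkappa)}\sum_{r=1}^{n}\frac{\theta^{r}h(z)}{r!\,\mathrm m^{r-1}}\sum_{l=1}^{r}\binom{r}{l}\frac{\mathcal{A}_l(\varkappa)}{r-l+1}$ appearing in the statement.

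For the remainder part I would apply the displayed bound $|\mathscr{R}_n(h;e^{v},z)|\le\frac{64}{n!}(1+\log^{2}z)\,\Upsilon(\theta^{n}h,\rho)\bigl(|v-\log z|^{n}+\rho^{-5}|v-\log z|^{n+5}\bigr)$, split $|v-\log z|^{p}\le 2^{p-1}\bigl(|v-\tfrac{j}{\mathrm m}|^{p}+|\tfrac{j}{\mathrm m}-\log z|^{p}\bigr)$ for $p=n$ and $p=n+5$, integrate over $[\tfrac{j}{\mathrm m},\tfrac{j+1}{\mathrm m}]$ using $|v-\tfrac{j}{\mathrm m}|\le\mathrm m^{-1}$, and then take $\bigvee_j$ against $\varkappa(e^{-j}z^{\mathrm m})$: the $j$-independent pieces contribute $m_0(\varkappa)$ with the fractions $\tfrac1{n+1}$ and $\tfrac{2^{5}}{n+6}$, while the $|j-\mathrm m\log z|^{n}$ and $|j-\mathrm m\log z|^{n+5}$ pieces contribute $m_n(\varkappa)$ and $2^{5}m_{n+5}(\varkappa)$, all finite by Lemma~\ref{lma1} since $\kappa=n+5$. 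Choosing $\rho=\mathrm m^{-1}$, factoring $2^{n-1}$ out of $64\cdot2^{n-1}=2^{n+5}$, dividing by $\mathcal{A}_0(\varkappa)$ and multiplying by $\mathrm m$ then reproduces the right-hand side of the statement.

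The step I expect to be the real obstacle is reconciling the supremum (max-product) structure with the \emph{signed} polynomial main term: one must pass from $\bigvee_{j}\varkappa(e^{-j}z^{\mathrm m})(\cdots)$ to the moments $\mathcal{A}_l(\varkappa)$ while keeping the polynomial contribution signed and estimating only the remainder $E_j$ in absolute value. This is exactly where $(\varkappa_3)$ enters essentially, together with the elementary inequality $|\bigvee_j x_j-\bigvee_j y_j|\le\bigvee_j|x_j-y_j|$ (used already in Lemma~\ref{lma5}(3)) applied with $x_j=\varkappa(e^{-j}z^{\mathrm m})\,\mathrm m\!\int_{j/\mathrm m}^{(j+1)/\mathrm m}h(e^v)\,dv$ and $y_j=\varkappa(e^{-j}z^{\mathrm m})\bigl(h(z)+P_j\bigr)$; carrying the binomial weights $\binom{r}{l}/(r-l+1)$ and every power of $\mathrm m$ through this manipulation so that they match the statement precisely is the delicate bookkeeping, whereas the analytic estimates themselves are routine given the lemmas already established.
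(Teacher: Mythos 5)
Your proposal follows essentially the same route as the paper's proof: Mellin--Taylor expansion inside each Kantorovich integral, a polynomial part $I_1$ collapsed via the binomial identity and assumption $(\varkappa_3)$ into the moment sum $\sum_{l}\binom{r}{l}\mathcal{A}_l(\varkappa)/(r-l+1)$, and a remainder part $I_2$ bounded by the displayed estimate on $\mathscr{R}_n$ with the splitting $|v-\log z|^{p}\le 2^{p-1}(|v-\tfrac{j}{\mathrm m}|^{p}+|\tfrac{j}{\mathrm m}-\log z|^{p})$ and the choice $\rho=\mathrm m^{-1}$. Your explicit use of $|\bigvee_j x_j-\bigvee_j y_j|\le\bigvee_j|x_j-y_j|$ to separate the signed main term from the remainder is in fact a slightly more careful treatment of the max-product structure than the paper's, which writes this decomposition only as an inequality and then treats the supremum of the Taylor sum termwise.
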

	
\begin{proof}
Since $h^{n} \in \mathscr{V}^{\mathrm{w}}_{+}(\mathbb{R_{+}})$ , using Mellin-Taylor expansion at a point $z \in \mathbb{R}_{+}$ and by definition of $(\mathscr{M}^{\varkappa}_{\mathrm{m}}h)$ we can write.
\begin{align*}
    (\mathscr{M}^{\varkappa}_{\mathrm{m}}h) \leq & \frac{1}{\mathcal{A}_{0}(\varkappa)}\bigvee_{j\in \mathbb{Z}} \varkappa(e^{-j}z^{\mathrm{m}}) \; \mathrm{m} \int_{\frac{j}{\mathrm{m}}}^{\frac{j+1}{\mathrm{m}}} \sum_{k=0}^{n} \frac{\theta^{k}f(z)}{k!} (v-\log z)^{k} dv \\ & + \frac{1}{\mathcal{A}_{0}(\varkappa)}\bigvee_{j\in \mathbb{Z}}  \varkappa(e^{-j}z^{\mathrm{m}})\; \mathrm{m} \int_{\frac{j}{\mathrm{m}}}^{\frac{j+1}{\mathrm{m}}} \mathscr{R}_n(h;v,z) dv \\
    = & I_{1} + I_{2}	
\end{align*}
Let us first consider $I_1.$ In view of binomial expansion , we obtain 
\begin{align*}
     I_{1}=&  \frac{1}{\mathcal{A}_{0}(\varkappa)}\sum_{k=0}^{n} \frac{\theta^{k}h(z)}{k!}  \bigvee_{j\in \mathbb{Z}} \varkappa(e^{-j}z^{\mathrm{m}}) \mathrm{m} \int_{\frac{j}{\mathrm{m}}}^{\frac{j+1}{\mathrm{m}}} \sum_{l=0}^{k} \binom{k}{l} \left(v-\frac{j}{\mathrm{m}}\right)^{k-l} \left(\frac{j}{\mathrm{m}} - \log z\right)^{l} dv \\
     = &  \frac{1}{\mathcal{A}_{0}(\varkappa)} \sum_{k=0}^{n} \frac{\theta^{k}h(z)}{k!} \sum_{l=0}^{k} \binom{k}{l} \mathrm{m} \; \bigvee_{j\in \mathbb{Z}} \varkappa(e^{-j}z^{\mathrm{m}}) \frac{(j-\mathrm{m}\log z)^l}{\mathrm{m}^{l}} \int_{\frac{j}{\mathrm{m}}}^{\frac{j+1}{\mathrm{m}}}\left(v-\frac{j}{\mathrm{m}}\right)^{k-l} dv\\
     = &  \frac{1}{\mathcal{A}_{0}(\varkappa)} \sum_{k=0}^{n} \frac{\theta^{k}h(z)}{k!} \sum_{l=0}^{k} \binom{k}{l} \frac{1}{\mathrm{m}^{l-1}} \bigvee_{j\in \mathbb{Z}} \varkappa(e^{-j}z^{\mathrm{m}}) (j-\mathrm{m}\log z)^l \frac{1}{(k-l+1) \mathrm{m}^{k-l+1}} \\
     = & \frac{1}{\mathcal{A}_{0}(\varkappa)} \sum_{k=0}^{n} \frac{\theta^{k}h(z)}{k!} \sum_{l=0}^{k} \binom{k}{l} \frac{1}{\mathrm{m}^{l-1}}  \frac{1}{(k-l+1) \mathrm{m}^{k-l+1}} \bigvee_{j\in \mathbb{Z}} \varkappa(e^{-j}z^{\mathrm{m}}) (j-\mathrm{m}\log z)^l \\
     = &  \frac{1}{\mathcal{A}_{0}(\varkappa)} \sum_{k=0}^{n} \frac{\theta^{k}h(z)}{k!} \sum_{l=0}^{k} \binom{k}{l}   \frac{1}{(k-l+1)\mathrm{m}^{k}} \;  \mathcal{A}_{l}(\varkappa,z)\\  
     = & \frac{1}{\mathcal{A}_{0}(\varkappa)} \sum_{k=0}^{n} \frac{\theta^{k}h(z)}{k!\; \mathrm{m}^{k}} \sum_{l=0}^{k} \binom{k}{l}   \frac{1}{(k-l+1)} \;  \mathcal{A}_{l}(\varkappa,z)\\
     = & h(z) +  \frac{1}{\mathcal{A}_{0}(\varkappa)}\frac{1}{\mathrm{m}}\;\sum_{k=1}^{n} \frac{\theta^{k}h(z)}{k! \mathrm{m}^{k-1}} \sum_{l=0}^{k} \binom{k}{l} \frac{1}{k-l+1} \mathcal{A}_{l}(\varkappa,z).
\end{align*}
	
On the other-hand concerning $I_{2}$ by the inequality
$$\left| \mathscr{R}_n(h;v,z) \right| \leq  \frac{64}{n!} (1+\log^{2}z) \Upsilon(\theta^{n}h,\rho) \left( |v-\log z|^{n} + \frac{|v-\log z|^{n+5}}{\rho^{5}}\right),$$ we have
\begin{align*}
    |I_{2}| \leq & \frac{1}{\mathcal{A}_{0}(\varkappa)}\frac{64\; \mathrm{m}}{n!} (1+\log^{2}z) \Upsilon(\theta^{n}h,\rho) \bigvee_{j\in \mathbb{Z}} \varkappa(e^{-j}z^{\mathrm{m}}) \int_{\frac{j}{\mathrm{m}}}^{\frac{j+1}{\mathrm{m}}} \left( |v-\log z|^{n} + \frac{|v-\log z|^{n+5}}{\rho^{5}}\right) dv\\
    \leq & \frac{1}{\mathcal{A}_{0}(\varkappa)}\frac{64\; \mathrm{m}}{n!} (1+\log^{2}z) \Upsilon(\theta^{n}h,\rho) \bigvee_{j\in \mathbb{Z}} \varkappa(e^{-j}z^{\mathrm{m}})\\ & \left[ \int_{\frac{j}{\mathrm{m}}}^{\frac{j+1}{\mathrm{m}}}  \left|\left(v- \frac{j}{\mathrm{m}}\right) + \left( \frac{j}{\mathrm{m}} - \log z \right)\right|^{n} dv + \frac{1}{\rho^{5}}\int_{\frac{j}{\mathrm{m}}}^{\frac{j+1}{\mathrm{m}}} \left|\left(v- \frac{j}{\mathrm{m}}\right) + \left( \frac{j}{\mathrm{m}} - \log z \right)\right|^{n+5}  dv \right]\\
    \leq & \frac{1}{\mathcal{A}_{0}(\varkappa)}\frac{64\; \mathrm{m}}{n!} (1+\log^{2}z) \Upsilon(\theta^{n}h,\rho) \bigvee_{j\in \mathbb{Z}} \varkappa(e^{-j}z^{\mathrm{m}})\\ & \left[  2^ {n-1} \int_{\frac{j}{\mathrm{m}}}^{\frac{j+1}{\mathrm{m}}} \left( \left|v - \frac{j}{\mathrm{m}}\right|^{n} + \left| \frac{j}{\mathrm{m}} - \log z \right|^{n} \right) dv + \frac{2^{n+4}}{\rho^{5}}\int_{\frac{j}{\mathrm{m}}}^{\frac{j+1}{\mathrm{m}}} \left(\left|v- \frac{j}{\mathrm{m}}\right|^{n+5} + \left| \frac{j}{\mathrm{m}} - \log z \right|^{n+5} \right) dv \right] \\
    \leq & \frac{1}{\mathcal{A}_{0}(\varkappa)}\frac{64\; \mathrm{m}}{n!} (1+\log^{2}z) \Upsilon(\theta^{n}h,\rho) \bigvee_{j\in \mathbb{Z}} \varkappa(e^{-j}z^{\mathrm{m}})\\ & \left[  2^ {n-1}\left( \frac{1}{(n+1)\mathrm{m}^{n+1}} + \frac{|j-\mathrm{m}\log z|^{n}}{\mathrm{m}^{n+1}} \right)  + \frac{2^{n+4}}{\rho^{5}} \left( \frac{1}{(n+6)\mathrm{m}^{n+6}} + \frac{|j-\mathrm{m}\log z|^{n+5}}{\mathrm{m}^{n+6}} \right) \right] \\
    \leq & \frac{1}{\mathcal{A}_{0}(\varkappa)}\frac{64\; \mathrm{m}\; 2^{n-1}}{n! \mathrm{m}^{n+1}} (1+\log^{2}z) \Upsilon(\theta^{n}h,\rho) \bigvee_{j\in \mathbb{Z}} \varkappa(e^{-j}z^{\mathrm{m}})\\ & \left[  \left( \frac{1}{(n+1)} + |j-\mathrm{m}\log z|^{n} \right)  + \frac{2^{5}}{\delta^{5} \mathrm{m}^{5}} \left( \frac{1}{(n+6)} + |j-\mathrm{m}\log z|^{n+5} \right) \right] \\
    \leq & \frac{ 2^{n+5}}{\mathcal{A}_{0}(\varkappa) \; n! \mathrm{m}^{n}} (1+\log^{2}z) \Upsilon(\theta^{n}h,\rho)  \\ &\left[  \left( \frac{1}{(n+1)} m_{0}(\varkappa) + m_{n}(\varkappa) \right)  + \frac{2^{5}}{\rho^{5} \mathrm{m}^{5}} \left( \frac{1}{(n+6)} m_{0}(\varkappa) + m_{n+5}(\varkappa) \right) \right]. \\
\end{align*}\\
Now choosing $\rho = \mathrm{m}^{-1}$
\begin{align*}
    |I_{2}| \leq & \frac{ 2^{n+5}}{\mathcal{A}_{0}(\varkappa) \; n! \mathrm{m}^{n}} (1+\log^{2}z) \Upsilon(\theta^{n}h,\rho) \\ & \left[  \left( \frac{1}{(n+1)} m_{0}(\varkappa) + m_{n}(\varkappa) \right)  + 2^{5}  \left( \frac{1}{(n+6)} m_{0}(\varkappa) + m_{n+5}(\varkappa) \right) \right].
\end{align*}
This proves the result.
\end{proof}
%++++++++++++++++++++++++++++++++++++++++++++++++++++++++++++++++++++++++++++++++++++++++++++++++++++++++++++++++++++++++++++++++++++++++++++++++++++++++++++++++++++++++++++++++++++++++++++++++++++++++
\section{Examples of Kernels and  Graphical Visualizations}\label{section4}
In this section we present few examples of kernels satisfying the assumptions given in discussed theory. To visualize the effectiveness of the Max-Product Kantorovich-type Exponential Sampling Operators, we present graphical visualizations for a range of test functions and kernel selections over the interval $(0.1, 10)$. Specifically, we consider the following test functions:
$$h_1(z) = e^{-z} \cos(2\pi z), \quad h_2(z) = \log(1+z), \quad h_3(z) = \frac{\sin(z)}{1+z^2},$$
along with three well-known kernel functions: the B-spline kernel, the Mellin--Fejér kernel, and the Mellin--Jackson kernel (see \cite{ANG}).
%\cite{butzer1991approximation, bock2012mellin}.

\subsection{B-spline Kernel}

The B-spline kernels in the Mellin framework are defined for $z \in \mathbb{R}^{+}$ by:
\begin{equation*}
    B_n(z) := \frac{1}{(n-1)!} \sum_{k=0}^{n} (-1)^k \binom{n}{k} \left(\frac{n}{2} + \log z - k\right)_{+}^{n+1},
\end{equation*}
where $(x)_+ := \max\{x, 0\}$. The compact support of $B_n$ in $\mathbb{R}^{+}$ ensures that the kernel meets the necessary conditions $(\varkappa_1)$ and $(\varkappa_2)$ for the Kantorovich-type operators.

We begin by analyzing the performance of the operator for the test function $h_1(z) = e^{-z} \cos(2\pi z)$ using the B-spline kernel of order $3$. The kernel used is given by:
\begin{equation*}
B_3(z) = \begin{cases}
    \frac{3}{4} - z^2, & \text{if } |z| \leq 1, \\
    \frac{1}{2}(2 - |z|)^2, & \text{if } 1 < |z| \leq 2, \\
    0, & \text{otherwise.}
\end{cases}
\end{equation*}

\begin{figure}[h!]
  \centering
  \includegraphics[width=1\linewidth]{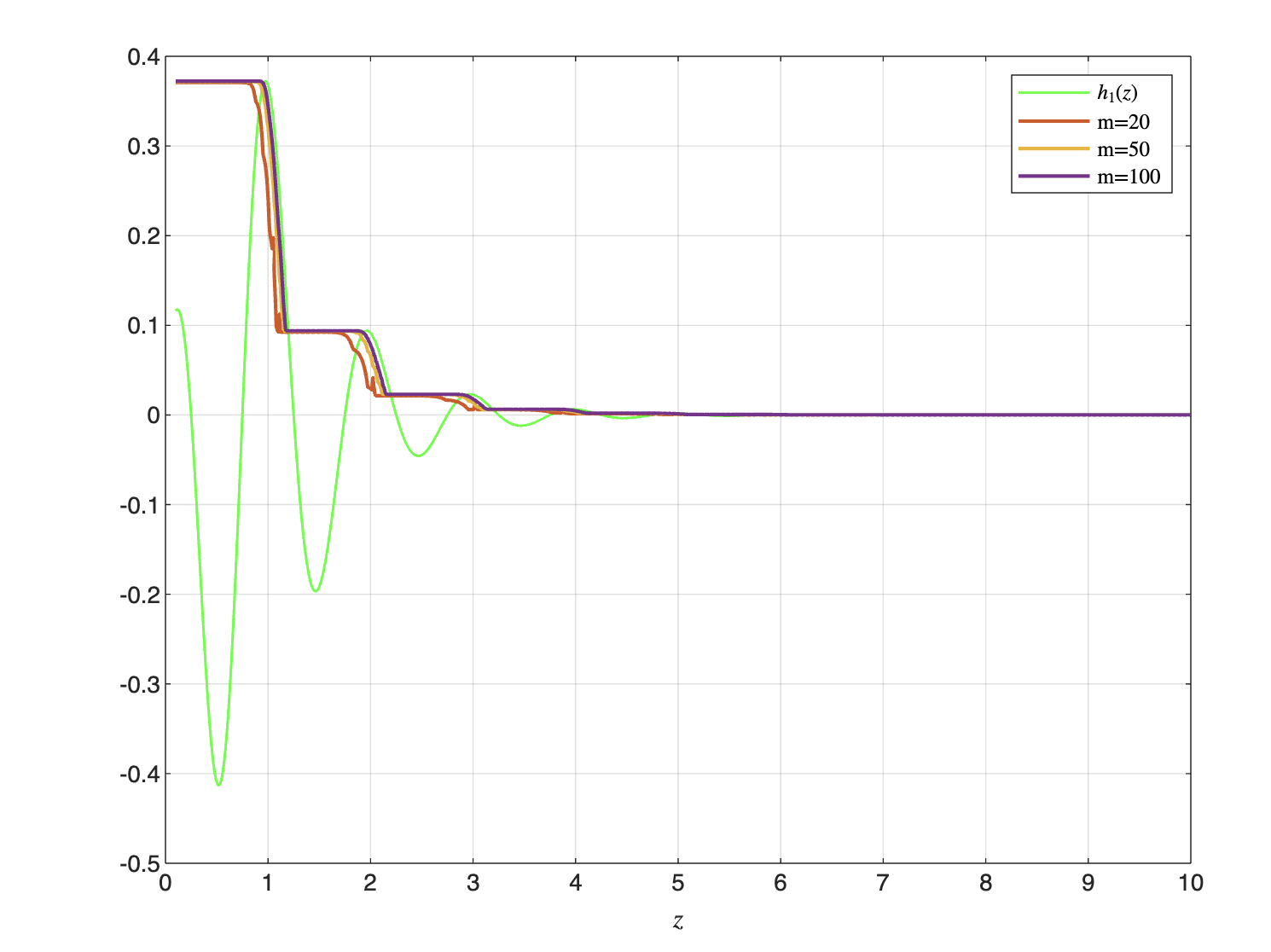}
  \caption{Approximation of $h_{1}(z)$ by $\mathscr{M}^{B_{3}}_{m}(h,z)$ for $m=20,50, 100$ .}
  \label{fig:mellin-jackson-h3}
\end{figure}

\vspace{1em}

\noindent\parbox{\linewidth}{
\textbf{Table 1.} Estimation of the approximation error (up to 4 decimal points)  in the approximation of $h_{1}(z)$ by $\mathscr{M}^{B_{3}}_{m}(h,z)$ for $ m = 20,50, 100 $.
\vspace{0.3em}
}

\begin{table}[htbp]
\centering
%\caption{Error estimation of $h_{1}(z)$ by $\mathscr{M}^{B_{3}}_{m}(h_{1},z)$ for $ m = 20,50, 100 $.}
\begin{tabular}{c|c|cc|cc|cc}
\hline
$z$ & Exact & \multicolumn{2}{c|}{$m=20$} & \multicolumn{2}{c|}{$m=50$} & \multicolumn{2}{c}{$m=100$} \\
    &       & Approx & Error & Approx & Error & Approx & Error \\
\hline
0.50 & -0.4097 & 0.3707 & 0.7804 & 0.3718 & 0.7815 & 0.3723 & 0.7819 \\
1.00 &  0.3679 & 0.2437 & 0.1242 & 0.3224 & 0.0455 & 0.3452 & 0.0226 \\
2.00 &  0.0914 & 0.0293 & 0.0621 & 0.0662 & 0.0252 & 0.0786 & 0.0128 \\
4.00 &  0.0063 & 0.0014 & 0.0049 & 0.0030 & 0.0033 & 0.0046 & 0.0016 \\
8.00 &  0.0001 & 0.0000 & 0.0001 & 0.0000 & 0.0000 & 0.0000 & 0.0000 \\
\hline
\end{tabular}
\end{table}

\subsection{Mellin--Fej\'{e}r Kernel}

The Mellin--Fej\'{e}r kernel is defined for $z \in \mathbb{R}^{+}$, $t \in \mathbb{R}$, and $\beta \geq 1$ as:
\begin{equation*}
    F_{\beta}^{t}(z) = \frac{\beta}{2\pi z^t} \left[ \text{sinc}\left(\frac{\beta \pi \log z}{1} \right) \right]^2,
\end{equation*}
where $\text{sinc}(x) = \frac{\sin(\pi x)}{\pi x}$. For our illustration, we set $\beta=1$ and $t=0$.

Using this kernel, we approximate $h_2(z) = \log(1+z)$ with increasing values of $m$. The results are shown below.

\begin{figure}[h!]
  \centering
  \includegraphics[width=1\linewidth]{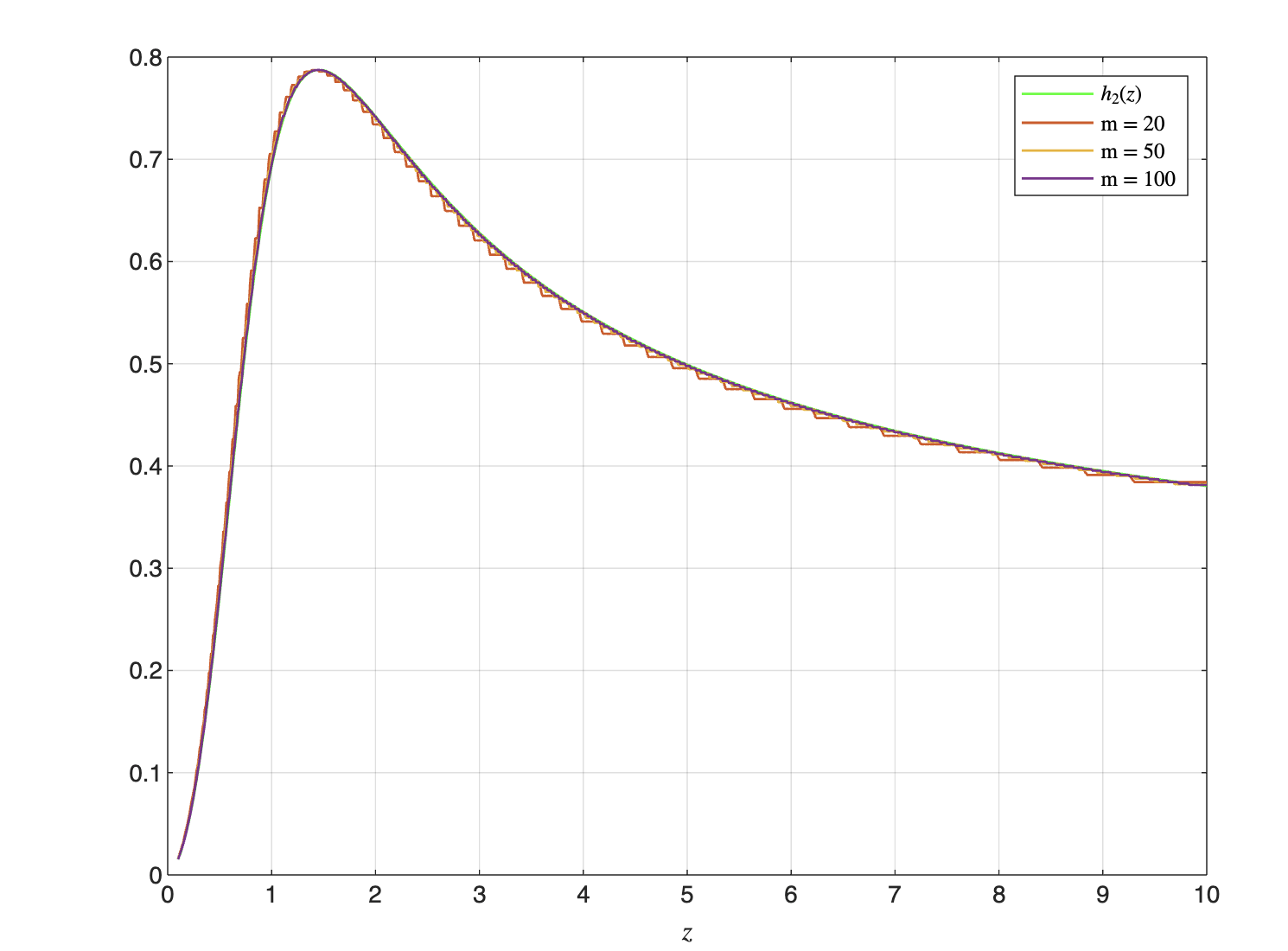}
  \caption{Approximation of $h_{2}(z)$ by $\mathscr{M}^{ F_{1}^{0}}_{m}(h,z)$ for $m=20,50,100$.}
  \label{fig:mellin-jackson-h3}
\end{figure}

\vspace{1em}
\noindent\parbox{\linewidth}{
\textbf{Table 2.} Approximation Error estimation (upto 4 decimal points)  in the approximation of $h_{2}(z)$ by $\mathscr{M}^{ F_{1}^{0}}_{m}(h,z)$ for $ m = 20,50, 100$.
\vspace{0.3em}
}

\begin{table}[htbp]
\centering
%\caption{Approximation of $h_2(z) = \log(1+z)$ using Mellin--Fejér kernel at different nodes}
\begin{tabular}{c|c|cc|cc|cc}
\hline
$z$ & Exact & \multicolumn{2}{c|}{$m=20$} & \multicolumn{2}{c|}{$m=50$} & \multicolumn{2}{c}{$m=100$} \\
    &       & Approx & Error & Approx & Error & Approx & Error \\
\hline
0.50 & 0.2739 & 0.2901 & 0.0163 & 0.2778 & 0.0039 & 0.2778 & 0.0039 \\
1.00 & 0.6931 & 0.7052 & 0.0120 & 0.6981 & 0.0049 & 0.6956 & 0.0025 \\
2.00 & 0.7421 & 0.7341 & 0.0080 & 0.7379 & 0.0042 & 0.7416 & 0.0005 \\
4.00 & 0.5508 & 0.5413 & 0.0095 & 0.5499 & 0.0009 & 0.5487 & 0.0022 \\
8.00 & 0.4127 & 0.4073 & 0.0054 & 0.4111 & 0.0016 & 0.4118 & 0.0009 \\
\hline
\end{tabular}
\end{table}

\subsection{Mellin--Jackson Kernel}

Lastly, we consider the Mellin--Jackson kernel, defined as follows for $\beta \geq 1$, $n \in \mathbb{N}$, and $z \in \mathbb{R}^{+}$:
\begin{equation*}
    J^{t}_{\beta,n}(z) = C_{\beta,n} z^{-t} \left[ \text{sinc}\left(\frac{\log(z)}{2\beta \pi n} \right) \right]^{2n},
\end{equation*}
where $C_{\beta,n}^{-1} := \int_0^\infty \left[ \text{sinc}\left(\frac{\log(z)}{2\beta \pi n} \right) \right]^{2n} \mathrm{d}z$ and $\text{sinc}(v):= \begin{cases}
    \frac{sin\; \pi v}{\pi v} & v \neq 0\\
    1 &  v= 0
\end{cases} $.\\
We choose $\beta = 1$, $n = 3$, and $t = 0$ in our experiments.

The test function considered here is $h_3(z) = \frac{\sin(z)}{1 + z^2}$. The results demonstrate a clear improvement in the approximation quality as $m$ increases.

\begin{figure}[h!]
  \centering
  \includegraphics[width=1\linewidth]{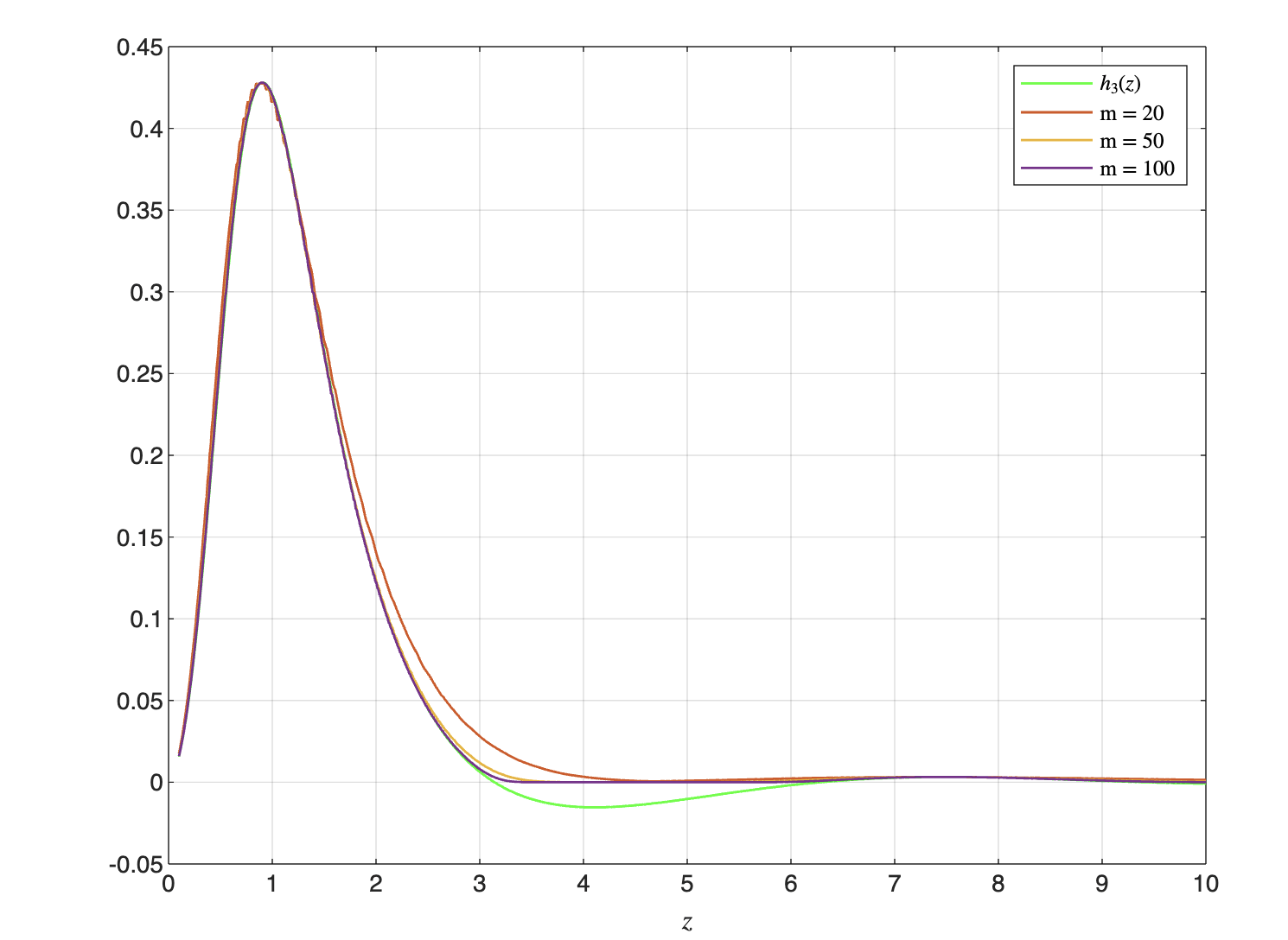}
  \caption{Approximation of $h_{3}(z)$ by  $\mathscr{M}^{ J^{0}_{1,3}}_{m}(h,z)$ for $m=20,50, 100$.}
  \label{fig:mellin-jackson-h3}
\end{figure}

\vspace{1em}

\noindent\parbox{\linewidth}{
\textbf{Table 3.}Error estimation (upto 4 decimal points) in the approximation of $h_{3}(z)$ by  $\mathscr{M}^{ J^{0}_{1,3}}_{m}(h,z)$ for $m=20,50, 100$.
\vspace{0.3em}
}

\begin{table}[htbp]
\centering
%\caption{Approximation of $h_3(z) = \frac{\sin z}{1 + z^2}$ using Mellin--Jackson kernel at different nodes}
\begin{tabular}{c|c|cc|cc|cc}
\hline
$z$ & Exact & \multicolumn{2}{c|}{$m=20$} & \multicolumn{2}{c|}{$m=50$} & \multicolumn{2}{c}{$m=100$} \\
    &       & Approx & Error & Approx & Error & Approx & Error \\
\hline
0.50 & 0.2591 & 0.2794 & 0.0203 & 0.2655 & 0.0064 & 0.2621 & 0.0031 \\
1.00 & 0.4207 & 0.4163 & 0.0044 & 0.4191 & 0.0016 & 0.4200 & 0.0008 \\
2.00 & 0.1228 & 0.1407 & 0.0179 & 0.1234 & 0.0006 & 0.1221 & 0.0008 \\
4.00 & -0.0152 & 0.0033 & 0.0186 & 0.0000 & 0.0152 & 0.0000 & 0.0152 \\
8.00 & 0.0029 & 0.0030 & 0.0001 & 0.0029 & 0.0000 & 0.0028 & 0.0000 \\
\hline
\end{tabular}
\end{table}

\section{Conclusions}\label{section5}
In this paper, we studied the approximation properties of the Max-Product Kantrovich Exponential Sampling operator for functions in weighted spaces. We have proven the well-definedness of the operator and discuss its convergence, pointwise and in uniform sense. The weighted logarithmic modulus of continuity has been used to estimate the rate of convergence. Finally, we find that the choice of kernel function is a crucial factor in determining the accuracy of the approximation. We can get precise approximations for a large class of functions with an appropriate choice of the kernel and adjustments of the operator’s parameters.

\subsection*{Acknowledgments}
	We are extremely thankful to the reviewers for their careful reading of the manuscript and making valuable suggestions which led to a better presentation of the paper.
	\subsection*{Author Contributions} 
	All authors contributed equally to this work.

\subsection*{Conflicts of Interest}
	There is no conflict of interest regarding the publication of this article

\end{document}